\newcommand{\mi}{^*}
\title{The Jones polynomial for a torus knot with twists}
\author{Brandon Bavier, Brandy Doleshal}
\begin{document}

\maketitle

\begin{abstract}
We compute the Jones polynomial for a three-parameter family of links, the twisted torus links of the form $T((p,q),(2,s))$ where $p$ and $q$ are coprime and $s$ is nonzero. When $s = 2n$, these links are the twisted torus knots $T(p,q,2,n)$. We show that for $T(p,q,2,n)$, the Jones polynomial is trivial if and only if the knot is trivial.   
\end{abstract}

\section{Introduction}

Twisted torus knots were first introduced by Dean \cite{DeanSFSSurgeries} to study knots with Seifert fibered space surgeries. In addition to their exceptional Dehn surgeries, others have studied their knot types \cite{LeeTTKUnknots} \cite{LeeTTKTypes}, their bridge spectra \cite{BowmanBridgeSpectra}, and their ribbon length \cite{KimRibbonLength}, among other attributes. Still little is known about the various knot polynomials of the twisted torus knots. 

The Alexander polynomial has been computed when $r=2$ \cite{MortonAlexPoly}, and the knot Floer homology has been computed when $q \equiv \pm 1 \pmod p$ \cite{VafaeeKnotFloer}. No computations of the Jones polynomial appear in the literature. Here, for convenience of computations, we work with a generalization of twisted torus knots called twisted torus links, introduced by Birman and Kofman \cite{BirmanTTLinks}, to find the the Jones polynomial of the class of twisted torus knots $T(p,q, 2, n)$.

\subsection{Outline}

In Section \ref{sec:bg}, we provide the necessary background.

We prove our main results in Section \ref{sec:res}. Using the bracket polynomial and the skein relation below (in particular Equation \ref{eqn:skmain}), we reduce the bracket of a twisted torus link into a bracket of a twisted torus link with one fewer crossing, and a bracket of some other link. We will then show that this other link is, in fact, a certain torus knot, based on $p$ and $q$, with additional nugatory crossings. Then, using induction on a certain type of twist, we will get a closed form for the bracket polynomial of $T((p,q), (2,s))$. Finally, we will use the relationship between the bracket polynomial and Jones polynomial to obtain our result.

Then, in Section \ref{sec:apps}, we re-derive results of Lee by examining the auxiliary polynomial, determining conditions on $p$, $q$, and $s$ that allow the auxiliary polynomial to be trivial. In doing so, we show that for a three-parameter family of knots, the Jones polynomial is trivial if and only if the knot is trivial.

\section{Background}\label{sec:bg}

A twisted torus knot $T(p,q,r,n)$ is obtained by twisting $r$ adjacent strands of a $(p,q)$-torus knot, where $\gcd(p,q)=1$, with $n$ full twists. Notice that $T(p,q,r,n)$ can be represented by the braid $(\sigma_1 \sigma_2 \cdots \sigma_{p-1})^q (\sigma_1 \sigma_2 \cdots \sigma_{r-1})^{nr}$. A twisted torus link is a generalization of the idea of twisting strands of a torus knot, where we twist a decreasing number of strands in each section of twists. The twisted torus link is denoted $T((p_1, q_1), (p_2, q_2), \ldots ,(p_l, q_l))$, where $p_i<p_{i-1}$ for $2\leq i \leq l$ and $p_l\ge 2$, and can be represented by the braid $$(\sigma_1 \sigma_2 \cdots \sigma_{p_1-1})^{q_1}(\sigma_1 \sigma_2 \cdots \sigma_{p_2-1})^{q_2}\cdots(\sigma_1 \sigma_2 \cdots \sigma_{p_l-1})^{q_l}.$$ 

In this work, we restrict to the case when $l=2$ and $r=2$, considering the twisted torus link $T((p,q),(2,s))$ with $p\ge 3$ and $q \ge 1$. Notice here, that when $s = 2n$ for some integer $n$, this is the twisted torus knot denoted by $T(p,q,2,n)$.

As a note before the proofs, our diagrams yield the mirror image of the usual torus links and twisted torus links. However, the bracket polynomial and Jones polynomial of a link can be obtained from the bracket polynomial and Jones polynomial of the mirror image of a link by negating all the exponents (so $A^4$ becomes $A^{-4}$ and vice versa). For clarity, we use $T\mi((p,q),(2,n))$ to denote the mirror image of $T((p,q),(2,n))$.

The Jones polynomial is a common knot invariant which assigns each knot a polynomial in the variable $t^{1/2}$. While there are several ways to calculate the Jones polynomial, in this paper we will use the Kauffman bracket definition. 

Given a link $L$, the Kauffman bracket polynomial, $\bracket{L}$ is defined by the following three rules:
\begin{subequations}
    \label{eqn:skein}
    \begin{align}
    \label{eqn:skcirc} \bracket{ \KPcircle} &= 1\\
    \label{eqn:skjoin} \bracket{ L\sqcup\KPcircle} &= (-A^2-A^{-2})\bracket{ L}\\
    \label{eqn:skmain} \bracket{\KPposcrossing} &= A\bracket{\KPvertical} + A^{-1}\bracket{\KPhorizontal}
    \end{align}
\end{subequations}
where $\KPcircle$ is the usual unknot, and, if $\KPposcrossing$ represents a diagram for $L$ with positive crossing $c$, $\KPvertical$ and $\KPhorizontal$ represent the diagrams where we replace $c$ with the appropriate resolution. The resolution $\KPvertical$ is called the $\infty$-resolution and $\KPhorizontal$ is called the $0$-resolution. 

While the bracket polynomial is not a knot invariant (in particular, it changes under Type 1 Reidemeister moves), it can be normalized into the auxiliary polynomial using the relation
\begin{equation}
    \label{eqn:auxpoly}
    X(L) = (-A)^{-3w(L)}\bracket{L},
\end{equation}
where $w(L)$ is the writhe of the link $L$. The auxiliary polynomial, then, is a knot invariant. From here, the Jones polynomial, $V(L)$, is obtained from $X(L)$ by replacing the $A$ with $t^{-1/4}$. 

While there are different links that share the same Jones polynomial, it is currently unknown if the Jones polynomial detects the unknot. 

\section{Results}\label{sec:res}

In this section, we provide the skein relation computations that produce the bracket polynomial for the twisted torus links $T^*((p,q),(2,s))$, where $p$ and $q$ are positive coprime integers, and $s$ is any nonzero integer. When $s = 2n$ for some nonzero integer $n$, we use this to compute the auxiliary polynomial for $T^*(p,q,2,n)$ and the Jones polynomial for the twisted torus knots $T(p,q,2,n)$, where $p$ and $q$ are coprime integers with $p\ge 3$, $q\ge 1$.

\begin{lem}
\label{lem:skeinone}
Let $p$, $q$ and $s$ be integers. Suppose $p\ge3$, $q \ge 1$, and $s\neq 0$. If $s<0$,
\[\bracket{ T^*((p,q),(2,s))} = A\bracket{ T^*((p,q),(2,s+1))} + (-1)^{3-3\abs{s}}A^{2-3\abs{s}}\bracket{ K}.\]
Otherwise
\[\bracket{T^*((p,q),(2,s))} =  A^{-1}\bracket{T\mi((p,q),(2,s-1))} + (-1)^{3\abs{s}-3} A^{3\abs{s}-2}\bracket{K}\]
where $K$ is some link.
\end{lem}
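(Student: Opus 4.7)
My plan is to apply the Kauffman bracket skein relation \eqref{eqn:skmain} to a single crossing of the $(2,s)$-twist region of the braid representation of $T\mi((p,q),(2,s))$; for concreteness I would choose the crossing at the top of the twist region, closest to the $(p,q)$-torus portion. One of the two resolutions preserves the braid structure at the chosen crossing, producing $T\mi((p,q),(2,s\mp 1))$ and contributing a coefficient of $A^{-1}$ when $s>0$ (the twist crossings are negative in the mirror diagram) or $A$ when $s<0$ (the twist crossings are positive), using the skein relation together with its mirrored form. This accounts for the first term of each formula. The remaining work is to handle the other resolution, which replaces the chosen crossing by a horizontal cap-cup.

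Once the cap-cup is inserted, the two strands of the twist region are joined at the top by the cap arc. I would argue inductively that each of the $\abs{s}-1$ remaining crossings can be peeled off as a nugatory curl: the cap together with the short arcs descending from it into the topmost of the remaining crossings forms a loop that makes that crossing a Reidemeister I curl. Removing the curl multiplies the bracket by a factor $-A^{\pm 3}$ and leaves a shorter twist capped off in exactly the same way, so the argument iterates. Whatever diagram survives once all curls have been removed is the link $K$ of the statement.

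The key sign check is the handedness of each curl. Orienting the two twisting strands, one sees that the cap reverses the direction of traversal along one of them, so each self-crossing inherited from a remaining twist crossing has \emph{opposite} sign to the original. Hence when $s>0$ the $s-1$ remaining negative crossings produce right-handed curls contributing $-A^{3}$ each, and when $s<0$ the remaining positive crossings produce left-handed curls contributing $-A^{-3}$ each. Multiplying the cap-cup skein coefficient by these curl factors gives $A\cdot(-A^{3})^{s-1}=(-1)^{3s-3}A^{3s-2}$ for $s>0$ and $A^{-1}\cdot(-A^{-3})^{\abs{s}-1}=(-1)^{3-3\abs{s}}A^{2-3\abs{s}}$ for $s<0$, matching the two stated coefficients exactly.

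The main obstacle will be a clean justification of the curl-handedness claim, since it rests on a local orientation computation at the cap rather than on any global property of the braid. A careful picture together with the standard sign convention for crossings should settle the matter, after which the rest of the proof is simply assembling the skein relation with $\abs{s}-1$ applications of Reidemeister I.
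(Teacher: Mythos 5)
Your proposal is correct and follows essentially the same route as the paper: resolve one crossing of the twist region via the skein relation, observe that one resolution yields $T\mi((p,q),(2,s\mp 1))$ with coefficient $A^{\mp 1}$, and peel off the remaining $\abs{s}-1$ crossings as Reidemeister~I curls, each contributing $-A^{\pm 3}$, with the signs working out exactly as you compute. The only (immaterial) difference is that you resolve the crossing at the top of the twist region while the paper resolves the bottom-most one.
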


\begin{proof}
We'll start with the case where $s<0$, with a diagram for $T\mi((p,q),(2,s))$ defined by the braid 
\[(\sigma_1^{-1} \cdots \sigma^{-1}_{p-1})^q \sigma_{1}^{\abs{s}}\]





To get our result, we'll use the skein relation (Equation \ref{eqn:skmain}) by selecting the bottom most crossing in the diagram. It is clear that an $\infty$-resolution at this crossing will give us $T\mi((p,q),(2,s+1))$, as $s$ is negative, so removing a crossing will increase its value. 

On the other hand, a $0$-resolution will connect two strands. For the bottom part of this resolution, we can pull it through the bottom up to the top of the diagram. For the top part, however, we will come across a nugatory crossing. As we are working in brackets, we can remove this with a Reidemeister 1 move, at the cost of adding $(-A)^{-3}$ in front of the bracket polynomial.

When we do this, we are left with a diagram with one fewer total crossings, but with an added nugatory crossing. We work our way up, untwisting these two strands, until we are left with a diagram such as in Figure \ref{fig:TTL652sresolved}, which we call $K$.

\begin{figure}
    \centering
    \includegraphics[scale=1]{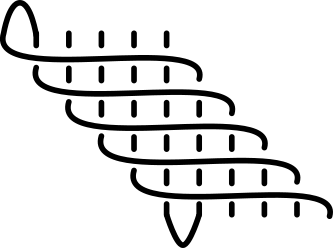}
    \caption{The Twisted Torus Link $T\mi((6,5),(2,s))$ with a 0-resolution}
    \label{fig:TTL652sresolved} 
\end{figure}

In order to get here, we added $(-A)^{-3}$ in front of the bracket each time we undid a nugatory crossing. The total number of times we do this is $\abs{s}-1$ (as we start with $\abs{s}$ crossings in this column, and then remove one with a resolution), so we add a total of $(-A)^{3-3\abs{s}}$ in front. So,
\begin{align*}
    \bracket{T\mi((p,q),(2,s))} &= A\bracket{T\mi((p,q),(2,s+1))} + A^{-1}\cdot (-A)^{3-3\abs{s}}\bracket{K}\\
        &= A\bracket{T\mi((p,q),(2,s+1))} + (-1)^{3-3\abs{s}}A^{2-3\abs{s}}\bracket{K}\\
\end{align*} 
and we are done.

On the other hand, if $s>0$, then we still use the skein relation by selecting the bottom-most crossing. Now, though, the $0$-resolution at this crossing will give us the link $T\mi((p,q),(2,s-1))$, while the $\infty$-resolution will connect two strands. Again, we work our way up with a series of Reidemeister 1 moves, but will now add $(-A)^3$ in front of the bracket for each move, with a total of $\abs{s}-1$ crossings. All combined, we'll get:
\begin{align*}
    \bracket{T\mi((p,q),(2,s))} &= A^{-1}\bracket{T\mi((p,q),(2,s-1))} + A^{1}\cdot (-A)^{3\abs{s}-3}\bracket{K}\\
        &= A^{-1}\bracket{T\mi((p,q),(2,s-1))} + (-1)^{3\abs{s}-3} A^{3\abs{s}-2}\bracket{K}
\end{align*}

\end{proof}

Note that whether $s$ is positive or negative, our diagram of $K$ will remain the same. As such, we can continue without having to worry about the sign of $s$ until the end.

\begin{lem}
\label{lem:brackets}
$K$ is isotopic to the torus link $T\mi(p-2k, q-2\ell)$, for some $k,\ell$. Further, one of the following is true:
\begin{align*}
\bracket{ K} &= A^{6\ell}\left(-A^2-A^{-2}\right)\bracket{ T\mi(p-2k, q-2\ell)}\text{, or}\\ 
\bracket{ K} &= A^{6\ell}\bracket{ T\mi(p-2k, q-2\ell)}.
\end{align*}
\end{lem}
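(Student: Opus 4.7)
The plan is to examine the diagram $K$ produced at the end of the proof of Lemma \ref{lem:skeinone}. By construction, $K$ is the closure of the $(p,q)$-torus braid $(\sigma_1^{-1}\cdots\sigma_{p-1}^{-1})^q$ with a cap joining two adjacent bottom strands, the $\sigma_1^{\pm|s|}$ twist column having already been absorbed into the coefficient in front of $\bracket{K}$ in Lemma \ref{lem:skeinone}. I would show by a direct isotopy argument that this diagram reduces, up to accumulated Reidemeister~I factors and possibly a separated unknotted component, to the closure of a smaller torus braid $T\mi(p-2k,\,q-2\ell)$.

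The first step is to slide the cap upward through the torus braid one band at a time. Because each band $(\sigma_1^{-1}\cdots\sigma_{p-1}^{-1})$ sends the leftmost strand all the way across, two adjacent strands at the bottom come from widely separated top positions, so the cap threads the braid nontrivially. I would peel generators off of each successive band and match them against the cap, using braid relations to convert the remaining portion into a torus braid on $p-2$ strands. Each such reduction introduces crossings that become nugatory and can be removed by Reidemeister~I at the cost of a factor of $(-A)^{\pm 3}$; I would verify that the signs combine so that two bands collapse into a single reduction step contributing exactly $A^{6}$ to the bracket.

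Iterating this reduction $\ell$ times produces the prefactor $A^{6\ell}$ and converts $K$ into the closure of a torus braid of type $(p-2k,\,q-2\ell)$, where $k$ and $\ell$ are determined by how many iterations are possible before either the braid index or the band count drops below what the reduction requires. When the iteration terminates with the cap peeled off as a separated unknotted loop, Equation \ref{eqn:skjoin} introduces the extra $(-A^{2}-A^{-2})$ factor of the first case; if instead the cap is absorbed directly into the closure of $T\mi(p-2k,\,q-2\ell)$, we land in the second case. Which case occurs is governed by a parity condition extracted from the reduction process.

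The main obstacle is the precise bookkeeping of the isotopy and Reidemeister~I factors through a single reduction step, and the verification that the signs and powers combine to exactly $A^{6}$ per step (rather than some sign-laden variant) so that $\ell$ iterations yield exactly $A^{6\ell}$. A secondary technical point is identifying the stopping condition for the iteration, and hence the precise relationship between $(k,\ell)$ and $(p,q)$; this will likely require a separate induction keyed on the quotient and remainder of $q$ by a function of $p$, tracking which positions of the strand permutation the cap can reach.
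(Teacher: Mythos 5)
Your proposal follows essentially the same route as the paper: isotope the cap up through the torus braid, absorb each removed pair of partial twists as a pair of nugatory crossings worth $A^{6}$, and split into cases according to whether the cap closes off a separate unknotted component. The paper is slightly more explicit where you are vague — it enumerates four configurations for how the two arcs meet at the top (including one requiring Reidemeister 2 moves plus an extra $A^{6}$, and one forcing another pass that drops the strand count by 2, which is what decouples the counter $k$ from the counter $\ell$) — but these are exactly the bookkeeping points you flagged, and the precise determination of $k$ and $\ell$ is deferred to later lemmas in the paper as well.
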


\begin{proof}
Our goal here is to try to get a torus knot out of $K$. As such, we'll start by isotoping the bottom half-circle arc up to the top of the diagram through the braided part.

When we do this isotopy, we are removing some number of partial twists from our diagram. In particular, as we slide the arc connecting two strands up, we remove an even number of partial twists. Outside of removing these partial twists, however, this isotopy does not remove or interact with any other crossings; we are, in effect, just removing strands from these partial twists.

\begin{figure}
    \centering
    \includegraphics[scale=1]{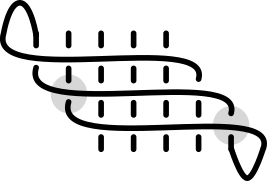}
    \caption{Each time we remove a partial twist, we pick up two nugatory crossings, shaded here.}
    \label{fig:ellwrithe} 
\end{figure}

Each time we remove two partial twists, our diagram picks up two nugatory crossings, as seen in Figure \ref{fig:ellwrithe}. We can remove these from our diagram by multiplying our bracket polynomial by $A^6$.

When we do this isotopy, our diagram will have two half-circles on the top of the diagram, in one of four configurations.
\begin{enumerate}
    \item The two half-circles connect on both sides
    \item They connect, with strands 1,2,3, as in Figure \ref{fig:TTL612sresolved}
    \item They connect, with strands 1,2,$p$, as in Figure \ref{fig:case3}
    \item They don't connect
\end{enumerate}
After this isotopy, outside of the top part of our diagram, we still have a torus link. So we need to consider what happens to the top.


\begin{figure}
    \centering
    \begin{subfigure}[b]{.35\textwidth}
        \centering
        \includegraphics[scale=1]{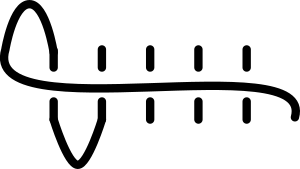}
        \caption{Case 2 Example}
        \label{fig:TTL612sresolved} 
    \end{subfigure}
    \begin{subfigure}[b]{.35\textwidth}
        \centering
        \includegraphics[scale=1]{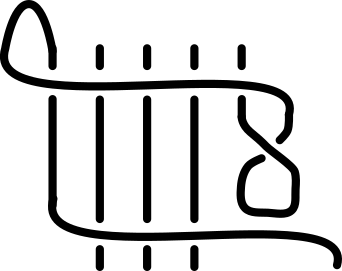}
        \caption{Case 3 Example}
        \label{fig:case3} 
    \end{subfigure}
    \label{fig:twocases}
    \caption{\textit{(A)} After the isotopy, strands 1, 2, and 3 in $T\mi((6,1),(2,s))$ connect. \textit{(B)} After the isotopy, strands 1, 2, and 5 in $T\mi((5,2),(2,s))$ connect, with some number of nugatory crossings.}
\end{figure}

In case 1, we can remove the circle using Equation \ref{eqn:skjoin}, and get back to a torus link. In case 2, the top of our diagram looks like Figure \ref{fig:TTL612sresolved}, and so quickly gives us a torus link. In case 3, we can move the strand over with a series of Reidemeister 2 moves (with no change to the bracket polynomial), and then undo the two nugatory crossings with a Reidemeister 1 move, at the cost of adding $A^6$. Note that this is effectively removing another pair of partial twists from our diagram. Finally, in case 4, we can isotope the ``bottom arc'' through the top of the diagram back to the bottom, and keep isotoping until we are in one of the other cases. When we do this, the braid representation is similar to $K$, but with two fewer strands. In fact, each time we do this isotopy, we reduce the number of strands by 2, so let $k$ be the number of times we need to slide the arc to the top.

Now, to put it all together, after completing this isotopy $k$ times, we will be in one of the first three cases mentioned above, and for some $\ell$, we will have removed a total of $2\ell$ partial twists, thus picking up $2\ell$ nugatory crossings (this includes the partial twists removed if we are in case 3). In the first case, we'll have a diagram that looks like $T\mi(p-2k, q-2\ell)\sqcup \KPcircle$ (plus $2\ell$ nugatory crossings), while in the second and third case, we'll just have $T\mi(p-2k, q-2\ell)$  (plus $2\ell$ nugatory crossings). In the first case, we can remove the circle by appending a $(-A^2-A^{-2})$ to the bracket. Then, in any of our cases, we can remove the nugatory crossings by appending a $A^{6\ell}$ to the bracket, leaving us with just $T\mi(p-2k, q-2\ell)$, and giving us our result.
\end{proof}

Now that we know that $K$ is isotopic to a torus link, we now will determine exactly what torus link it must be. In particular, we will see that the exact link is dependent on only $p$ and $q$.

\begin{lem}
\label{lem:k}
If $q = p\cdot m + \hat{q}$, let $k_p, k_1,$ and $k_2$ be such that
\[1+\hat{q}k_i\equiv i\pmod p.\]
Then $k=\min(k_p,k_1,k_2)$.
\end{lem}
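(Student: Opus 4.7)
My plan is to track the arc's position as we perform the sliding isotopy of Lemma \ref{lem:brackets}, matching the geometric cases 1, 2, 3 to the modular conditions defining $k_1$, $k_2$, and $k_p$.

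I would first compute the permutation induced by the torus braid $(\sigma_1\cdots\sigma_{p-1})^q$. Reading the braid word as top-to-bottom in the diagram, one cycle $\sigma_1\cdots\sigma_{p-1}$ induces the cyclic permutation $i \mapsto i-1 \pmod p$ on strand positions, so $q$ cycles send the strand at top position $i$ to bottom position $i - \hat{q} \pmod p$. Equivalently, the strand found at bottom position $j$ originated from top position $j + \hat{q} \pmod p$.

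Next, I would verify by induction on $k$ that after $k$ slides, the slid arc sits at top positions $\{1 + k\hat{q}, 2 + k\hat{q}\} \pmod p$. The base case $k=1$ follows immediately: the arc is initially at bottom positions $\{1,2\}$, and sliding it up through the braid follows the two strands it is attached to up to top positions $\{1 + \hat{q}, 2 + \hat{q}\}$. For the inductive step (when we are in case 4 and must continue), isotoping the arc back through the top to the bottom places it at bottom positions $\{1 + k\hat{q}, 2 + k\hat{q}\}$ in the original labeling, so the next slide shifts by another $+\hat{q}$. The crucial point is that removing two strands per slide does not disturb the underlying torus permutation on the remaining strands, since the braid acts on positions modulo $p$ and this structure restricts cleanly to any subset.

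Having established this position formula, I would match cases by how the slid arc at positions $\{A, A+1\}$ (where $A = 1 + k\hat{q} \pmod p$) overlaps the upper arc at $\{1,2\}$. Case 1 (the arcs coincide) occurs when $A \equiv 1 \pmod p$; case 2 (the slid arc is at $\{2,3\}$, overlapping at position 2) when $A \equiv 2 \pmod p$; and case 3 (the slid arc is at $\{p, 1\}$, overlapping at position 1) when $A \equiv p \equiv 0 \pmod p$. These are precisely the defining equations $1 + \hat{q} k_i \equiv i \pmod p$ for $i = 1, 2, p$. Since $k$ is the first number of slides that brings us out of case 4 and into one of cases 1, 2, 3, we conclude $k = \min(k_p, k_1, k_2)$.

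The main obstacle I anticipate is rigorously justifying the inductive shift step: showing that the shift per slide remains $\hat{q} \pmod p$ even after strands have been removed. A priori the smaller braid obtained after one slide might have a different effective number of cycles and thus a different ``shift per slide.'' The resolution is to track positions in the original braid (rather than in the reduced braid) throughout the process, and to observe that the torus permutation's modular shift by $\hat{q}$ is preserved when restricted to the remaining strands, so the cumulative shift after $k$ slides is always $k\hat{q} \pmod p$.
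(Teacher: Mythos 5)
Your proposal is correct and follows essentially the same approach as the paper's proof: tracking the slid arc's endpoint, which advances by $\hat{q} \pmod p$ with each isotopy, and identifying the three terminating configurations with the congruences $1+\hat{q}k_i\equiv i\pmod p$ so that $k$ is the minimum of the three solutions. Your added care in justifying that the per-slide shift stays $\hat{q}$ after strands are removed is a detail the paper passes over silently, but it does not change the argument.
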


\begin{proof}
    Consider the braid representation of $K$, and isotope the bottom arc up to the top, just as in Lemma \ref{lem:brackets}. Note that this takes the first strand at the bottom, and connects it to the $\hat{q}+1$ strand at the top.

    After $k$ isotopies up, we'll end up in one of three cases, as mentioned in Lemma \ref{lem:brackets}. Note that in these cases, the first strand from the bottom will connect with the first, second, or $p^{th}$ strand from the top. Each time we isotope up, we connect the $j$ strand to the $\hat{q}+j$ strand. So, for example, after one isotopy, the first strand connects to the $\hat{q}+1$ strand. After the second isotopy, it connects to the $2\hat{q}+1\pmod p$ strand, and so on. After $k$ isotopies, then, the first strand connects to the $k\hat{q}+1 \pmod p$ strand.

    We are done with our isotopies the first time the first strand from the bottom connects to the first, second, or $p^{th}$ strand from the top. This means, then, that
    \[1+\hat{q}k\equiv 1, 2, \text{ or }p\pmod p.\]
    As $k$ is the first time the bottom strand connects to the first, second, or $p^{th}$ strand, $k$ must be the first number that satisfies this equation. As such, by finding $k_p$, $k_1$, and $k_2$ that satisfy the equation $1+\hat{q}k_i\equiv i\pmod p$, we can recover $k$ by taking the minimum of these three values, and so have our result.
\end{proof}

\begin{lem}\label{lem:ell}
Given $p,q,$ and $k$ as defined above, let $\ell'$ be the solution to the equation
\[p\cdot \ell' + \epsilon = q\cdot k,\]
where $0\le \epsilon<p$. Then either $\ell=\ell'$ or $\ell=\ell'+1$.
\end{lem}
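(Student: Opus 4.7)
The plan is to count the partial twists removed as the arc slides up through the braid over the $k$ iterations described in Lemma~\ref{lem:brackets}, by working in an ``unwrapped'' picture where strand positions are taken as integers rather than residues modulo $p$. Each round $\sigma_1\cdots\sigma_{p-1}$ shifts every arc endpoint by $+1$, so after all $k$ slidings the left endpoint of the arc has traveled from position $1$ to $1+qk$ and the right endpoint from $2$ to $2+qk$. A partial twist is picked up at precisely those braid crossings where both arc endpoints are simultaneously involved, and a short local trace through one round at a wrap-around shows that these crossings correspond exactly to the ``wraps'' of the unwrapped trajectories, i.e.\ the transitions from $jp$ to $jp+1$ for some integer $j$.

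Using the decomposition $qk=p\ell'+\epsilon$ with $0\le\epsilon<p$, the left endpoint contributes $\ell'=\lfloor qk/p\rfloor$ wraps, and the right endpoint contributes $\ell'$ wraps when $\epsilon<p-1$ and $\ell'+1$ wraps when $\epsilon=p-1$. By Lemma~\ref{lem:k} the three possibilities $k=k_1,k_2,k_p$ give $\epsilon=0,1,p-1$ respectively, matching cases 1, 2, 3 of Lemma~\ref{lem:brackets}. Thus the sliding alone removes $2\ell'$ partial twists in cases 1 and 2, forcing $2\ell=2\ell'$ and $\ell=\ell'$. In case 3 the sliding removes $2\ell'+1$ partial twists, and the supplementary Reidemeister~2 and Reidemeister~1 moves performed in case 3 contribute the one additional partial twist needed to restore the even parity required by the $A^{6\ell}$ factor, bringing the total to $2(\ell'+1)$ and forcing $\ell=\ell'+1$. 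Either way $\ell\in\{\ell',\ell'+1\}$ as claimed.

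The main obstacle will be the careful local analysis at each wrap: one must verify that passing from $(p-1,p)$ to $(p,1)$ (a right-endpoint wrap) or from $(p,1)$ to $(1,2)$ (a left-endpoint wrap) within a single round of $\sigma_1\cdots\sigma_{p-1}$ produces exactly one crossing at which both arc strands are simultaneously involved, and that no such crossing arises at any non-wrap position. The case~3 bookkeeping is also delicate, since one must track how the ``$(1,p)$'' configuration at the top of the final braid together with the supplementary Reidemeister moves of Lemma~\ref{lem:brackets} supplies exactly the one missing partial twist rather than zero or two.
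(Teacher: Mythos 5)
Your proposal is essentially the paper's own argument: your ``unwrapped'' picture is exactly the $k$-fold stacked covering braid $\left(\sigma_1^{-1}\cdots\sigma_{p-1}^{-1}\right)^{qk}$ that the paper constructs, the decomposition $p\cdot\ell'+\epsilon=q\cdot k$ is identical, and your identification of $\epsilon=0,1,p-1$ with cases 1, 2, 3 of Lemma \ref{lem:brackets} via Lemma \ref{lem:k} is precisely the computation the paper records as Corollary \ref{cor:ell}, used here to decide which case forces $\ell=\ell'+1$. The one point where your bookkeeping diverges is case 3: the paper asserts (in Lemma \ref{lem:brackets}) that the sliding isotopy always removes partial twists in pairs, so it counts $2\ell'$ from the sliding plus a full extra pair from the two nugatory crossings left at the top, whereas you have the sliding remove an odd number $2\ell'+1$ (because only the right endpoint completes its last wrap) and the supplementary Reidemeister moves contribute just one more. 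The totals agree either way, but your intermediate claim contradicts the even-parity assertion that Lemma \ref{lem:brackets} and the $A^{6\ell}$ normalization rest on, so you should reconcile that attribution (or verify it against Figure \ref{fig:case3}) rather than treat it as interchangeable with the paper's count.
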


\begin{proof}

Create a covering of the disk representing the complete isotopy of Lemma \ref{lem:brackets} by stacking $k$ copies of the braid representation up, as in Figure \ref{fig:threestack}. This covering will then be the braid
\[\left(\sigma_1^{-1}\cdots\sigma_{p-1}^{-1}\right)^{q\cdot k},\]
where the first and second strand from the bottom are connected by an arc, and the first and second strand from the top are connected by an arc. 

\begin{figure}
    \centering
    \begin{subfigure}[b]{.35\textwidth}
        \centering
        \includegraphics[scale=1]{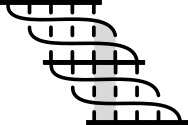}
        \caption{Two Stacks}
    \end{subfigure}
    \begin{subfigure}[b]{.35\textwidth}
        \centering    
        \includegraphics[scale=1]{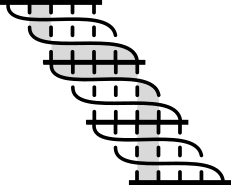}
        \caption{Three Stacks}
    \end{subfigure}
    \caption{\textit{(A)} $T\mi(5,2)$ stacked two times. The shading represents the isotopy up. \textit{(B)} $T\mi(5,2)$ stacked three times, with similar shading. Note that, as $k=2$, this second diagram will not happen, and is only for illustrative purposes.}
    \label{fig:threestack} 
\end{figure}

As we isotope our bottom arc up, every time it ``wraps around,'' we remove two twists from the diagram. Our arc will wrap around every time we have a full twist, $\left(\sigma_1^{-1}\cdots\sigma_{p-1}^{-1}\right)^{p}$. There are a total of $q\cdot k$ twists in the covering diagram; we can view this as $\ell'$ full twists, followed by some $\epsilon$ twists, with $0\le \epsilon<p$. Then it becomes clear that
\[p\cdot \ell' + \epsilon = q\cdot k.\]

If we are in either case 1 or case 2 in Lemma \ref{lem:brackets}, then $\ell=\ell'$. However, if we are in case 3, after this isotopy, strands 1,2, and $p$ will connect. This will leave us with two nugatory crossings, as can be seen in Figure \ref{fig:case3}. As such, we'll need to remove one more pair of twists, and so $\ell=\ell'+1$.

\end{proof}

\begin{cor}\label{cor:ell}
Given $p, q, k,$ and $\ell$ above, suppose $1+\hat{q}k\equiv i\pmod p$, where $i=0, 1, $ or $2$. Then
\[p\cdot \ell + i - 1 = q\cdot k.\]
\end{cor}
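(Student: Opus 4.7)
The plan is to combine Lemma \ref{lem:ell} with the case analysis of Lemma \ref{lem:brackets} using a single mod-$p$ congruence, then match the three possible residues $i \in \{0,1,2\}$ to the three cases in which the isotopy of Lemma \ref{lem:brackets} terminates.

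First, I would reduce the hypothesis modulo $p$. Writing $q = p m + \hat{q}$ gives $qk = p m k + \hat{q}k \equiv \hat{q}k \pmod p$, and the assumption $1 + \hat{q}k \equiv i \pmod p$ becomes $qk \equiv i - 1 \pmod p$.

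Next, I would bring in Lemma \ref{lem:ell}, which supplies $\ell'$ and $\epsilon$ with $0 \le \epsilon < p$ and $p\ell' + \epsilon = qk$. Reducing this equation mod $p$ yields $\epsilon \equiv i - 1 \pmod p$, and since $\epsilon \in [0, p)$ this pins $\epsilon$ down uniquely: $\epsilon = 0$ when $i = 1$, $\epsilon = 1$ when $i = 2$, and $\epsilon = p - 1$ when $i = 0$ (using that $i = 0$ in the corollary corresponds to $i = p$ in Lemma \ref{lem:k}).

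Finally, I would invoke the dichotomy $\ell \in \{\ell', \ell'+1\}$ from Lemma \ref{lem:ell}. For $i = 1$ (case 1 of Lemma \ref{lem:brackets}) and $i = 2$ (case 2) we have $\ell = \ell'$, so
\[ p\ell + (i-1) = p\ell' + \epsilon = qk. \]
For $i = 0$, which corresponds to case 3, we have $\ell = \ell' + 1$, so
\[ p\ell + (i-1) = p(\ell' + 1) - 1 = p\ell' + (p-1) = p\ell' + \epsilon = qk. \]
In every case the desired identity holds. The only real care needed is the bookkeeping identification $i = 0 \leftrightarrow i = p \leftrightarrow$ case 3, together with remembering the $+1$ correction to $\ell$ in that case; beyond this, the argument is a direct congruence calculation and I do not anticipate a substantive obstacle.
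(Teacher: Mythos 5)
Your proposal is correct and follows essentially the same route as the paper's proof: reduce $p\ell'+\epsilon=qk$ modulo $p$ to pin down $\epsilon\in\{0,1,p-1\}$ according to $i$, then apply the dichotomy $\ell=\ell'$ versus $\ell=\ell'+1$ from Lemma \ref{lem:ell}, with the $i=0$ residue matched to case 3. No substantive differences.
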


In particular, this corollary allows us to explicitly find $\ell$ without having to use an intermediary step of computing $\ell'$.

\begin{proof}
    Look at the equation from Lemma \ref{lem:ell}, and take both sides modulo $p$:
    \[p\cdot \ell' + \epsilon \equiv q\cdot k\pmod p.\]
    Note that $q\cdot k\equiv (p\cdot m + \hat{q})\cdot k \equiv \hat{q}\cdot k\pmod p$, so our equation simplifies to:
    \[\epsilon \equiv \hat{q}\cdot k \pmod p.\]
    By Lemma \ref{lem:k}, $\hat{q}\cdot k \equiv i-1\pmod p$, and so
    \[\epsilon\equiv i-1\pmod p.\]
    This means that there is some integer $x$ such that
    \[\epsilon = i-1 + p\cdot x.\]
    As $0\le \epsilon < p$, $x=0$ or $1$ based on $i$. We will consider each case.

    If $i=0$, then we have $0\le \epsilon = p\cdot x - 1$. This means that $x=1$, so $\epsilon = p-1$. In addition, when $i=0$, $\ell = \ell'+1$ (as mentioned in the proof above). Then, from Lemma \ref{lem:ell}, we have
    \begin{align*}
        p\cdot \ell' + \epsilon &= q\cdot k\\
        p\cdot \ell' + p-1 &= q\cdot k\\
        p\cdot (\ell'+1)-1 &= q\cdot k\\
        p\cdot \ell -1 &= q\cdot k\\
        p\cdot \ell + 0 - 1 &= q\cdot k.
    \end{align*}

    If $i=1$, then $\epsilon = p\cdot x + 1 -1 < p$, so $x=0$ and so $\epsilon=0$. Then, as $\ell = \ell'$, we get from Lemma \ref{lem:ell}:
    \begin{align*}
        p\cdot \ell + 0 &= q\cdot k\\
        p\cdot \ell + 1 - 1 &= q\cdot k
    \end{align*}
    On the other hand, if $i=2$, then $\epsilon = p\cdot x + 2 - 1 < p$, so $x=0$ and $\epsilon=1$. Similar to the previous case, $\ell = \ell'$, and so we get:
    \begin{align*}
        p\cdot \ell + 1 &= q\cdot k\\
        p\cdot \ell + 2 - 1 &= q\cdot k.
    \end{align*}
    This finishes the proof.
\end{proof}

Notice that it is possible for $\ell$ to be 0. For $p \ge 3$, this happens if and only if $q = k = 1$. 

\begin{lem}
\label{lem:recform}
Let $p$, $q$ and $s$ be integers. Let $p\ge3$, $q \ge 1$, and $s\neq 0$, and let $k$ and $\ell$ be as they are defined above. Suppose $p$ and $q$ are coprime. Then if $s<0$,
\[\bracket{ T\mi((p,q),(2,s))} = A\bracket{ T\mi((p,q),(2,s+1))} + (-1)^{1-\abs{s}} A^{2-3\abs{s}} A^{6\ell} \bracket{ T\mi(p-2k, q-2\ell)},\]
and if $s>0$,
\[\bracket{ T\mi((p,q),(2,s))} = A^{-1}\bracket{T\mi((p,q),(2,s-1))} + (-1)^{1-\abs{s}} A^{3\abs{s}-2} A^{6\ell}\bracket{ T\mi(p-2k, q-2\ell)}.\]
\end{lem}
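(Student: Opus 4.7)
The plan is to combine the skein recursion of Lemma \ref{lem:skeinone} with the identification of $K$ from Lemma \ref{lem:brackets}, arguing that the coprimality hypothesis $\gcd(p,q)=1$ forces the simpler of the two forms provided by the latter. Once that is done, the result will follow by direct substitution and a short sign simplification.

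The one step requiring real argument is excluding the case of Lemma \ref{lem:brackets} in which the extra factor $(-A^2-A^{-2})$ appears, coming from a split unknot component. This corresponds to configuration (1) in that proof, where the two top half-circles close up into a free circle. By the analysis of Lemma \ref{lem:k}, this happens precisely when the bottom strand $1$ reconnects to strand $1$ at the top, i.e., when $k=k_1$ with $\hat{q}k_1\equiv 0\pmod p$. Since $\gcd(\hat{q},p)=\gcd(q,p)=1$, the smallest positive such $k_1$ is $p$ itself. By contrast, $k_2$ and $k_p$, being the unique solutions of $\hat{q}k\equiv 1$ and $\hat{q}k\equiv -1\pmod p$, both lie in $\{1,\dots,p-1\}$. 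Therefore $k=\min(k_1,k_2,k_p)<p$, so we are never in the split case, and Lemma \ref{lem:brackets} reduces to the clean identity $\bracket{K}=A^{6\ell}\bracket{T\mi(p-2k,q-2\ell)}$.

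Substituting this into Lemma \ref{lem:skeinone} and using $(-1)^{3a}=(-1)^a$, the prefactor $(-1)^{3-3\abs{s}}A^{2-3\abs{s}}$ from the $s<0$ case becomes $(-1)^{1-\abs{s}}A^{2-3\abs{s}}A^{6\ell}$, and the analogous $(-1)^{3\abs{s}-3}A^{3\abs{s}-2}$ in the $s>0$ case becomes $(-1)^{1-\abs{s}}A^{3\abs{s}-2}A^{6\ell}$, matching the statement. The main obstacle is the exclusion of the split case above; everything else is a mechanical composition of Lemmas \ref{lem:skeinone} and \ref{lem:brackets}.
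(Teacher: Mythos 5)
Your proof is correct and follows essentially the same route as the paper's: substitute the identification of $\bracket{K}$ from Lemma \ref{lem:brackets} into the recursion of Lemma \ref{lem:skeinone} and simplify the sign via $(-1)^{3a}=(-1)^{a}$. You are in fact slightly more careful than the paper, whose proof silently uses the form of $\bracket{K}$ without the $(-A^{2}-A^{-2})$ factor; your observation that coprimality of $p$ and $\hat{q}$ forces $k_{1}=p>\min(k_{2},k_{p})$, so the split-circle configuration of Lemma \ref{lem:brackets} never arises, is precisely the justification the paper leaves implicit (it surfaces only later, in the proof of Lemma \ref{lem:relprime}, where the $i=1$ case is ruled out).
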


\begin{proof}
    We will prove this for $s<0$; the proof for $s>0$ follows similarly. First, by Lemma \ref{lem:skeinone},
    \[\bracket{ T\mi((p,q),(2,s))} = A\bracket{ T\mi((p,q),(2,s+1))} + (-1)^{3-3\abs{s}} A^{2-3\abs{s}} \bracket{ K }.\]
    Note that $(-1)^{3-3\abs{s}} = (-1)^{3(1-\abs{s})}=(-1)^{1-\abs{s}}$, so we can simplify a bit:
    \[\bracket{ T\mi((p,q),(2,s)} = A\bracket{ T\mi((p,q),(2,s+1))} + (-1)^{1-\abs{s}} A^{2-3\abs{s}}\bracket{ K}.\]
    Then, by Lemma \ref{lem:brackets},
    \[\bracket{ K } = A^{6\ell}\bracket{ T\mi(p-2k, q-2\ell)},\]
    and so substituting gives us our lemma.
\end{proof}

As a remark, when $s = 0$, we can combine the two parts of Lemma \ref{lem:recform} to derive the tautology $\bracket{T\mi(p,q)} = \bracket{T\mi(p,q)}$. 

Note also that, as $\abs{s}$ decreases, $k$ and $\ell$ do not change, since they are dependent on only $p$ and $q$, as we will show in Lemmas \ref{lem:kisqhat} and \ref{lem:lisphat}. 

\begin{lem}\label{lem:TTLbracket}
    Let $p$, $q$ and $s$ be integers. Let $p\ge3$, $q \ge 1$, and $s\neq 0$, and  let $k$ and $\ell$ be as they are defined above. Suppose $p$ and $q$ are coprime. Then, if $s<0$,
    \[\bracket{T\mi((p,q),(2,s))} = A^{\abs{s}}\bracket{T\mi(p,q)} + \sum_{i=0}^{\abs{s}-1} (-1)^{1+i-\abs{s}} A^{2-3\abs{s}+4i} A^{6\ell} \bracket{T\mi(p-2k,q-2\ell)},\]
    and if $s>0$, 
    \[\bracket{T\mi((p,q),(2,s))} = A^{-\abs{s}}\bracket{T\mi(p,q)} + \sum_{i=0}^{\abs{s}-1} (-1)^{\abs{s}-1-i} A^{3\abs{s}-2-4i} A^{6\ell} \bracket{T\mi(p-2k, q-2\ell)}.\]
\end{lem}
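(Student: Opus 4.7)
The plan is to prove Lemma \ref{lem:TTLbracket} by induction on $\abs{s}$, iteratively unwinding the one-step recursion given by Lemma \ref{lem:recform}. The closed form is exactly what you get by repeatedly applying that recursion until the twist parameter reaches $0$, at which point the leftover bracket is $\bracket{T\mi(p,q)}$. Since Lemmas \ref{lem:k} and \ref{lem:ell} guarantee that $k$ and $\ell$ depend only on $p$ and $q$, the secondary term $A^{6\ell}\bracket{T\mi(p-2k,q-2\ell)}$ stays fixed throughout the iteration — this is what makes the sum collapse to a clean form.

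I would handle the case $s<0$ first. The base case $\abs{s}=1$ (so $s=-1$) is an immediate check: Lemma \ref{lem:recform} gives $A\bracket{T\mi(p,q)} + (-1)^{0}A^{-1}A^{6\ell}\bracket{T\mi(p-2k,q-2\ell)}$, which is exactly the stated formula with the one-term sum $i=0$. For the inductive step, assume the formula holds at $s+1$ (where $\abs{s+1}=\abs{s}-1$) and plug it into the recursion from Lemma \ref{lem:recform}. Multiplying the inductive hypothesis by $A$ turns the leading $A^{\abs{s+1}}\bracket{T\mi(p,q)}$ into $A^{\abs{s}}\bracket{T\mi(p,q)}$, and transforms each summand indexed by $i$ (running from $0$ to $\abs{s}-2$) into a summand with exponent $A^{2-3\abs{s}+4(i+1)}$ and sign $(-1)^{1+(i+1)-\abs{s}}$. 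Reindexing by $i'=i+1$ matches the desired summation range $1 \le i' \le \abs{s}-1$. Finally, the extra term $(-1)^{1-\abs{s}}A^{2-3\abs{s}}A^{6\ell}\bracket{T\mi(p-2k,q-2\ell)}$ produced by Lemma \ref{lem:recform} corresponds exactly to the missing $i'=0$ term, completing the sum from $0$ to $\abs{s}-1$.

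The case $s>0$ proceeds identically, with two small sign-and-exponent shifts: we multiply the inductive hypothesis by $A^{-1}$ instead of $A$, and use the $s>0$ branch of Lemma \ref{lem:recform} so that the recursive term has $A^{3\abs{s}-2}$ rather than $A^{2-3\abs{s}}$. The reindexing $i'=i+1$ again lines up the shifted sum against the stated formula $\sum_{i=0}^{\abs{s}-1}(-1)^{\abs{s}-1-i}A^{3\abs{s}-2-4i}A^{6\ell}\bracket{T\mi(p-2k,q-2\ell)}$, and the extra term from the recursion fills in the $i'=0$ slot.

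The only real obstacle is bookkeeping: keeping the sign $(-1)^{1-\abs{s}}$ (equivalently $(-1)^{3-3\abs{s}}$) and the $A$-exponent straight as $\abs{s}$ decreases by $1$ in each induction step, and verifying that the reindexing $i \mapsto i+1$ lines up the sum exactly. There is no genuine geometric content added beyond Lemma \ref{lem:recform}; the work is entirely algebraic telescoping, so I would present the induction cleanly and only spell out the sign/exponent matching once for $s<0$, then note that the $s>0$ case is the mirror computation.
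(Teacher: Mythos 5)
Your proposal is correct and follows essentially the same route as the paper: induction on $\abs{s}$ using the one-step recursion of Lemma \ref{lem:recform}, with the base case $\abs{s}=1$, multiplication by $A^{\pm 1}$, the reindexing $i\mapsto i+1$, and the extra recursion term supplying the $i=0$ summand. The sign and exponent bookkeeping you describe checks out in both the $s<0$ and $s>0$ cases.
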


\begin{proof}
    We'll prove this for $s<0$; the proof for $s>0$ is similar. We proceed by induction on $\abs{s}$. First, note that $T\mi((p,q),(2,0)) = T(p,q)$. If $s=-1$, then by Lemma \ref{lem:recform},
    \begin{align*}
        \bracket{ T\mi((p,q),(2,-1)} &= A\bracket{ T\mi((p,q),(2,0))} + (-1)^{1-1} A^{2-3} A^{6\ell}\bracket{ T\mi(p-2k, q-2\ell)}\\
        &= A\bracket{ T\mi(p,q)} + (-1)^{1+0-1} A^{2-3\cdot 1 + 4\cdot 0} A^{6\ell}\bracket{ T\mi(p-2k, q-2\ell)},
    \end{align*}
  so the base case is established.  Now, we suppose that
    \[\bracket{ T\mi((p,q),(2,s)) } = A^{\abs{s}}\bracket{ T\mi(p,q) } + \sum_{i=0}^{\abs{s}-1} (-1)^{1+i-\abs{s}} A^{2-3\abs{s}+4i} A^{6\ell}\bracket{T\mi(p-2k,q-2\ell)}.\]
    Look at what happens for $s-1$. First, we can use Lemma \ref{lem:recform} to get:
    \[\bracket{T\mi((p,q),(2,s-1))} = A\bracket{T\mi((p,q),(2,s))} + (-1)^{1-\abs{s-1}} A^{2-3(\abs{s-1})} A^{6\ell} \bracket{T\mi(p-2k, q-2\ell)}.\]
   From the inductive hypothesis, we have an expression for $T\mi((p,q),(2,s))$, so:
    \begin{alignat*}{2}
        \bracket{T\mi((p,q),(2,s-1))} &= A(A^{\abs{s}}\bracket{T\mi(p,q)} &&+ \sum_{i=0}^{\abs{s}-1} (-1)^{1+i-\abs{s}} A^{2-3\abs{s}+4i} A^{6\ell} \bracket{T\mi(p-2k,q-2\ell)})\\
        & &&+ (-1)^{1-\abs{s-1}} A^{2-3\abs{s-1}} A^{6\ell} \bracket{T\mi(p-2k, q-2\ell)}\\
        &= A^{\abs{s}+1}\bracket{T\mi(p,q)} &&+ \sum_{i=0}^{\abs{s}-1} (-1)^{1+i-\abs{s}} A^{2-3\abs{s}+4i+1} A^{6\ell} \bracket{T\mi(p-2k,q-2\ell)}\\
        & &&+ (-1)^{1-\abs{s-1}} A^{2-3\abs{s-1}} A^{6\ell} \bracket{T\mi(p-2k, q-2\ell)}\\
        &= A^{\abs{s}+1}\bracket{T\mi(p,q)} &&+ \sum_{i=1}^{\abs{s}} (-1)^{1+i-1-\abs{s}} A^{2-3\abs{s}+4i-4+1} A^{6\ell} \bracket{T\mi(p-2k,q-2\ell)}\\
        & &&+ (-1)^{1-\abs{s-1}} A^{2-3\abs{s-1}} A^{6\ell} \bracket{T\mi(p-2k, q-2\ell)}\\
        &= A^{\abs{s-1}}\bracket{T\mi(p,q)} &&+ \sum_{i=1}^{\abs{s-1}-1} (-1)^{1+i - \abs{s-1}} A^{2-3\abs{s-1}+4i} A^{6\ell} \bracket{T\mi(p-2k,q-2\ell)}\\
        & &&+ (-1)^{1+0-\abs{s-1}} A^{2-3\abs{s-1}+4\cdot0} A^{6\ell}\bracket{T\mi(p-2k,q-2\ell)}\\
        &= A^{\abs{s-1}}\bracket{T\mi(p,q)} &&+ \sum_{i=0}^{\abs{s-1}-1} (-1)^{1+i-\abs{s-1}} A^{2-3\abs{s-1}+4i} A^{6\ell} \bracket{T\mi(p-2k,q-2\ell)}.
    \end{alignat*}
    This completes induction, and the lemma is proved.
\end{proof}

Since the goal is to find the Jones polynomial, we prove the following lemma to provide a conversion from the bracket polynomial found in Lemma \ref{lem:TTLbracket} to the auxilliary polynomial.

\begin{lem}\label{lem:BracketToAux}
    Let $D$ be the diagram of some link, and suppose that the bracket polynomial of $D$ can be written as
    \[\bracket{D} = \sum_{i=1}^n f_i(A)\bracket{D_i},\]
    where $D_i$ are link diagrams, and $f_i$ is a polynomial in $A$. Then,
    \[X(D) = (-A)^{-3w(D)}\sum_{i=1}^n f_i(A)(-A)^{3w(D_i)}X(D_i)\]
\end{lem}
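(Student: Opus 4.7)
The plan is to unwind the definition of the auxiliary polynomial on both sides of the given decomposition and then reassemble. By Equation \ref{eqn:auxpoly}, we have $X(L)=(-A)^{-3w(L)}\bracket{L}$ for any link diagram $L$, which rearranges to $\bracket{L}=(-A)^{3w(L)}X(L)$. So the strategy is to replace each bracket in the hypothesized sum by the corresponding auxiliary polynomial (times the appropriate writhe factor), and then multiply the whole equation by $(-A)^{-3w(D)}$.

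Concretely, I would first apply the rearranged identity to each $D_i$ to rewrite the hypothesis
\[
\bracket{D}=\sum_{i=1}^n f_i(A)\bracket{D_i}
\]
as
\[
\bracket{D}=\sum_{i=1}^n f_i(A)(-A)^{3w(D_i)}X(D_i).
\]
Then I would multiply both sides by $(-A)^{-3w(D)}$. The left-hand side becomes $(-A)^{-3w(D)}\bracket{D}=X(D)$ by Equation \ref{eqn:auxpoly}, while the right-hand side becomes $(-A)^{-3w(D)}\sum_{i=1}^n f_i(A)(-A)^{3w(D_i)}X(D_i)$, which is exactly the claimed formula.

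There is no real obstacle here; the lemma is a purely formal consequence of the definition of the auxiliary polynomial, since $(-A)^{3w(\cdot)}$ is an invertible scalar that can be moved freely across the sum. The only thing worth being careful about is that the identity relating $X$ and $\bracket{\cdot}$ is applied diagram-by-diagram (so the writhes $w(D_i)$ and $w(D)$ are computed from the specific diagrams appearing in the decomposition, not from the underlying links), but this is automatic from the statement.
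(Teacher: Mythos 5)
Your proposal is correct and follows exactly the same route as the paper's proof: rearrange the definition $X(D_i)=(-A)^{-3w(D_i)}\bracket{D_i}$ to express each $\bracket{D_i}$ in terms of $X(D_i)$, substitute into the decomposition, and multiply through by $(-A)^{-3w(D)}$. Nothing further is needed.
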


\begin{proof}
    First, recall the definition of the normalized bracket polynomial, $X(D_i)$:
    \[X(D_i) = (-A)^{-3w(D_i)}\bracket{D_i}.\]
    This means, then, that
    \[\bracket{D_i} = (-A)^{3w(D_i)} X(D_i).\]
    Then, for our specific diagram, we can write:
    \[\bracket{D} = \sum_{i=1}^n f_i(A) (-A)^{3w(D_i)}X(D_i),\]
    and so
    \begin{align*}
        X(D) &= (-A)^{-3w(D)}\bracket{D}\\
             &= (-A)^{-3w(D)}\sum_{i=1}^n f_i(A) (-A)^{3w(D_i)}X(D_i).
    \end{align*}
\end{proof}

We will also need the following lemma to obtain the auxiliary polynomial.

\begin{lem}\label{lem:relprime}
If $p$ and $q$ are relatively prime, then $p-2k$ and $q-2\ell$ are relatively prime.
\end{lem}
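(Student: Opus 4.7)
The plan is to use Corollary \ref{cor:ell} to reduce the coprimality question to a small divisibility fact. Corollary \ref{cor:ell} gives $p\ell + i - 1 = qk$ for some $i \in \{0, 1, 2\}$, so $p\ell - qk \in \{-1, 0, 1\}$.

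The main step is a standard $\gcd$ manipulation. Let $d$ be any common divisor of $p - 2k$ and $q - 2\ell$. Then $d$ divides the cross-combination
\[q(p - 2k) - p(q - 2\ell) = 2(p\ell - qk).\]
In the cases $i = 0$ and $i = 2$, the right-hand side is $\pm 2$, so $d \mid 2$. Since $\gcd(p, q) = 1$, at least one of $p, q$ is odd, which forces the corresponding entry among $p - 2k$ and $q - 2\ell$ to be odd; hence $d$ is odd and so $d = 1$.

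The one subtle point is the middle case $i = 1$, where $p\ell - qk = 0$ and the divisibility $d \mid 0$ carries no information. I would show that this case never actually arises under the hypotheses $p \ge 3$ and $\gcd(p, q) = 1$. If $i = 1$, then $k = k_1$ and the congruence $1 + \hat{q} k_1 \equiv 1 \pmod p$ combined with $\gcd(\hat{q}, p) = 1$ forces $p \mid k_1$, giving $k_1 = p$. On the other hand, because $\hat{q}$ is invertible mod $p$ and $p \ge 3$, the congruences $\hat{q} k \equiv -1$ and $\hat{q} k \equiv 1 \pmod p$ have smallest positive solutions strictly between $1$ and $p - 1$, so $k_p, k_2 \in \{1, \ldots, p-1\}$. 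Therefore $k = \min(k_p, k_1, k_2) \le \min(k_p, k_2) < p = k_1$, and $k \ne k_1$, ruling out $i = 1$.

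I expect the main obstacle to be exactly this exclusion of $i = 1$: the $\gcd$-plus-parity finish is routine once the linear combination $q(p-2k) - p(q-2\ell)$ is spotted, but ruling out $i = 1$ requires going back to Lemma \ref{lem:k} to observe that $k_1$ can never be the minimum of the three indices.
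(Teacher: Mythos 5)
Your proof is correct and follows essentially the same route as the paper: both exclude the case $i=1$ by observing that $\hat{q}k\equiv 0\pmod p$ would contradict $\gcd(p,q)=1$, and both then derive coprimality from the identity $p\ell - qk = \pm 1$ supplied by Corollary \ref{cor:ell}. The only cosmetic difference is the finish: the paper exhibits the B\'ezout combination $\ell(p-2k)-k(q-2\ell)=p\ell-qk=\pm 1$ directly, whereas you use $q(p-2k)-p(q-2\ell)=\pm 2$ and then need an extra parity step to get from $d\mid 2$ to $d=1$.
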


\begin{proof}
    First, if $p$ and $q$ are relatively prime, then note that the $i$ from Lemma \ref{lem:k} is either 0 or 2. If $i=1$, then we would have $\hat{q}k\equiv 0\pmod p$, with $k<p$, so $\hat{q}$ would have to share common factors with $p$. However, if $\hat{q}$ shares some common factor with $p$, then so too must $q=p\cdot m + \hat{q}$, a contradiction. 

    If $i \in \{0,2\}$, from Corollary \ref{cor:ell}, we have, after some rearranging,
    \[p\cdot \ell - q\cdot k = 1-i = \pm 1.\]
    Now, look at $p-2k$ and $q-2\ell$. If we can find some integers $x$ and $y$ such that
    \[x(p-2k) + y(q-2\ell) = \pm 1,\]
    then we will be done. Take $x=\ell$, and $y=-k$. Then:
    \begin{align*}
        \ell(p-2k) -k(q-2\ell) &= \ell \cdot p - 2\ell k - k\cdot q + 2\ell k\\
        &= \ell\cdot p-k\cdot q -2\ell k + 2\ell k\\
        &= \pm 1 - 0.
    \end{align*}
    And so we are done.
\end{proof}

\begin{lem}
\label{lem:AuxPolyNeg}
Let $p$, $q$ and $s$ be integers. Suppose $p\ge3$, $q \ge 1$,$s<0$, and $\gcd(p,q) = 1$. The auxiliary polynomial $X(T\mi((p,q),(2,s)))$ is 
\[\frac{(-1)^{\abs{s}} A^{2(p-1)(q-1)-2\abs{s}}}{1-A^{8}} \left[ X'_{p,q} - (-1)^{\abs{s}}\sum_{i=0}^{\abs{s}-1}(-1)^i A^{4i} A^{2-4\abs{s} + 4(k+\ell-k\ell) + 2(kq + \ell p)}X'_{p-2k,q-2\ell} \right] \]
where $X'_{p,q} = 1 - A^{4p+4} - A^{4q+4} + A^{4p+4q}$ and $X'_{p-2k,q-2\ell} = 1 - A^{4p
+4 - 8k} - A^{4q+4 - 8\ell} + A^{4p+4q - 8k - 8\ell}$.
\end{lem}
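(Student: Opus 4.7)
The plan is to combine the closed-form bracket expression from Lemma \ref{lem:TTLbracket} with the conversion rule of Lemma \ref{lem:BracketToAux}, then substitute the known auxiliary polynomials of the two torus knots that appear, namely $T\mi(p,q)$ and $T\mi(p-2k,q-2\ell)$.

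First I would record the three relevant writhes. The braid $(\sigma_1^{-1}\cdots\sigma_{p-1}^{-1})^q \sigma_1^{\abs{s}}$ drawing $T\mi((p,q),(2,s))$ for $s<0$ has writhe $w_0 = -(p-1)q + \abs{s}$, and similarly $w_1 := w(T\mi(p,q)) = -(p-1)q$ and $w_2 := w(T\mi(p-2k, q-2\ell)) = -(p-2k-1)(q-2\ell)$. Next I would invoke the classical Jones polynomial of a $(p,q)$-torus knot,
\[ V(T(p,q)) = \frac{t^{(p-1)(q-1)/2}\bigl(1 - t^{p+1} - t^{q+1} + t^{p+q}\bigr)}{1 - t^2}. \]
Applying $t = A^{-4}$ and $A \mapsto A^{-1}$ to pass to the mirror gives
\[ X(T\mi(p,q)) = \frac{A^{2(p-1)(q-1)}}{1 - A^8}\,X'_{p,q}, \qquad X(T\mi(p-2k, q-2\ell)) = \frac{A^{2(p-2k-1)(q-2\ell-1)}}{1 - A^8}\,X'_{p-2k,q-2\ell}; \]
here Lemma \ref{lem:relprime} ensures $\gcd(p-2k,q-2\ell) = 1$, so that the torus-knot formula legitimately applies to the second link.

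Substituting into Lemma \ref{lem:BracketToAux}, the summand from $\bracket{T\mi(p,q)}$ acquires the factor $A^{\abs{s}}(-A)^{3(w_1-w_0)} = A^{\abs{s}}(-A)^{-3\abs{s}} = (-1)^{\abs{s}}A^{-2\abs{s}}$, which when multiplied by $X(T\mi(p,q))$ reproduces the first term inside the brackets of the target formula. The $i$-th term of the sum picks up the factor $(-A)^{3(w_2-w_0)}$ times the recursion coefficient $(-1)^{1+i-\abs{s}}A^{2-3\abs{s}+4i+6\ell}$ times $X(T\mi(p-2k,q-2\ell))$. A direct computation gives $w_2 - w_0 = 2\ell p + 2kq - 4k\ell - 2\ell - \abs{s}$, whose parity equals that of $\abs{s}$, so $(-1)^{3(w_2-w_0)} = (-1)^{\abs{s}}$ and the signs fuse to $-(-1)^i$ once $(-1)^{\abs{s}}$ is factored out of the common prefactor.

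The main obstacle is purely algebraic bookkeeping: after summing $3(w_2-w_0)$, the recursion exponent $2-3\abs{s}+4i+6\ell$, and the torus-knot contribution $2(p-2k-1)(q-2\ell-1)$, and then pulling out the common prefactor $A^{2(p-1)(q-1)-2\abs{s}}/(1-A^8)$, the residual exponent should collapse to $2 - 4\abs{s} + 4(k+\ell-k\ell) + 2(kq+\ell p) + 4i$, exactly as the lemma asserts. No further geometric or topological input is needed beyond Lemmas \ref{lem:TTLbracket}, \ref{lem:BracketToAux}, and \ref{lem:relprime}.
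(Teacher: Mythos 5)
Your proposal is correct and follows essentially the same route as the paper's proof: combine Lemma \ref{lem:TTLbracket} with Lemma \ref{lem:BracketToAux}, compute the three writhes exactly as you do, invoke the torus-knot Jones polynomial together with Lemma \ref{lem:relprime}, and carry out the exponent and sign bookkeeping (your values of $w_2-w_0$, the parity argument giving $(-1)^{\abs{s}}$, and the residual exponent all match the paper's computation). No gaps.
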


\begin{proof}
Assume the given hypotheses on $p$, $q$ and $s$. From Lemma \ref{lem:TTLbracket}, we know that the bracket polynomial of $T\mi((p,q),(2,s))$ is given by     \[\bracket{T\mi((p,q),(2,s))} = A^{\abs{s}}\bracket{T\mi(p,q)} + \sum_{i=0}^{\abs{s}-1} (-1)^{1+i-\abs{s}} A^{2-3\abs{s}+4i} A^{6\ell} \bracket{T\mi(p-2k,q-2\ell)}.\]

Rewriting the sum $\displaystyle\sum_{i=0}^{\abs{s}-1} (-1)^{1+i-\abs{s}} A^{2-3\abs{s}+4i}$ as $\displaystyle (-1)^{1-\abs{s}}\sum_{i=0}^{\abs{s}-1} (-1)^i A^{4i}A^{2-3\abs{s}}$ and using Lemma \ref{lem:BracketToAux}, we have that $X(T\mi((p,q),(2,s)))$ is of the form 
\begin{align*}
(-A)^{-3w(D)} A^{\abs{s}}\bigg[&(-A)^{3w(D_{p,q})}X_{p,q} \\
&+(-1)^{1-\abs{s}} \sum_{i-1}^{\abs{s}}(-1)^i A^{4i}A^{2- 4\abs{s} +6\ell}(-A)^{3w(D_{p-2k,q-2\ell})}X_{p-2k,q-2l}\bigg],
\end{align*}

where the notation $X_{m,n}$ refers to the auxiliary polynomial for the torus knot $T\mi(m,n)$ and $D_{m,n}$ refers to our diagram for $T\mi(m,n)$. In this context, we are using the mirror image of the usual braid closure as the diagram for the knot $T\mi((p,q),(2,s))$, so $w(D) = -(p-1)q + \abs{s}$. Similarly $w(D_{p,q}) = -(p-1)q$ and $w(D_{p-2k,q-2l}) = -(p-2k-1)(q - 2l)$. 

We write $w(D_{p-2k,q-2l})$ instead as $-(p-1)q + 2kq + 2 \ell p-2\ell - 4k \ell$. Then we can factor $A^{-3(p-1)q}$ out of each term in the sum to cancel the $A^{3(p-1)q}$ in the front of the expression, so we have $X(T\mi((p,q),(2,s)))$ in the form 
\[ (-1)^{3\abs{s}}A^{-2\abs{s}} \left[X_{p,q} + (-1)^{1-\abs{s}}\sum_{i=0}^{\abs{s}-1}(-1)^i A^{4i}A^{2-4\abs{s}+6\ell}(-A)^{ 6kq + 6 \ell p-6\ell - 12k \ell}X_{p-2k,q-2l}\right].\]
We combine exponents further to obtain 
\[ (-1)^{\abs{s}}A^{-2\abs{s}} \left[X_{p,q} - (-1)^{\abs{s}}\sum_{i=0}^{\abs{s}-1}(-1)^i A^{4i}A^{2-4\abs{s} + 6kq + 6 \ell p - 12k \ell}X_{p-2k,q-2l}\right].\]

Now we use Lemma \ref{lem:relprime} and the fact that the Jones polynomial for the torus knot $T(m,n)$ is given by $\displaystyle\frac{t^{(m-1)(n-1)/2}(1 - t^{m+1} - t^{n+1} + t^{m+n})}{1-t^2}$ \cite{JonesPolyTorusKnot}. For ease of notation, we work with the auxiliary polynomial by substituting $A^{4}$ for $t$, since we are working with the mirror image, so that 

$$\displaystyle X_{p,q} = \frac{A^{2(p-1)(q-1)}(1 - A^{4p+4} - A^{4q+4} + A^{4p+4q})}{1-A^{8}}$$
and 
$$\displaystyle X_{p-2k,q - 2l} = \frac{A^{2(p-2k-1)(q-2\ell-1)}(1 - A^{4p+4-8k} - A^{4q+4 - 8\ell} + A^{4p+4q - 8k - 8\ell})}{1-A^{8}}.$$

Using the notation $X'_{p,q} = 1 - A^{4p+4} - A^{4q+4} + A^{4p+4q}$ and $X'_{p-2k,q-2\ell} = 1 - A^{4p+4 - 8k} - A^{4q+4 - 8\ell} + A^{4p+4q - 8k - 8\ell}$, we can rewrite $X_{p,q}$ and $X_{p-2k,q-2l}$ as $\displaystyle X_{p,q} = \frac{A^{2(p-1)(q-1)}X'_{p,q}}{1-A^{8}}$ and as $\displaystyle X_{p-2k,q - 2l} = \frac{A^{2(p-1)(q-1)}A^{-4\ell p + 4 \ell -4 kq +8 k \ell + 4k}X'_{p-2k,q-2l}}{1-A^{8}}$. Combining these with the previous sum and simplifying, we have our desired result.
\end{proof}

\begin{thm}
Let $p$, $q$ and $s$ be integers. Suppose $p\ge3$, $q \ge 1$, $\gcd(p,q) = 1$,   and that $s=2n$ for some negative integer $n$. The Jones polynomial of $T((p,q),(2,s)) = T(p,q,2,n)$ is

$$\frac{ t^{\frac{(p-1)(q-1)-\abs{s}}{2}}}{1-t^{2}} \left[J'_{p,q} - \sum_{i=0}^{\abs{s}-1}(-1)^i t^{i - \abs{s} + k+\ell- k\ell + \frac{(kq + \ell p+1)}{2}}J'_{p-2k,q-2\ell} \right]$$

where $J'_{p,q} = 1-t^{p+1}-t^{q+1}+t^{p+q}$ and $J'_{p-2k,q-2\ell} = 1-t^{p+1-2k}-t^{q+1-2\ell}+t^{p+q-2k-2\ell}$.
\end{thm}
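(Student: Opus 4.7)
The plan is that Lemma \ref{lem:AuxPolyNeg} already does essentially all the work, since it gives a closed form for the auxiliary polynomial $X(T\mi((p,q),(2,s)))$. The theorem should follow by (i) translating from the auxiliary polynomial of the mirror link $T\mi((p,q),(2,s))$ to the Jones polynomial of the original link $T((p,q),(2,s))$, and (ii) carrying out the substitution that converts the variable $A$ into $t$.

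First I would pin down the exact change of variable. By definition $V(L)(t) = X(L)(A=t^{-1/4})$, and the Jones polynomial of the mirror satisfies $V(L^*)(t) = V(L)(t^{-1})$. Chaining these gives
\[
V(T((p,q),(2,s)))(t) \;=\; V(T\mi((p,q),(2,s)))(t^{-1}) \;=\; X(T\mi((p,q),(2,s)))\big|_{A = t^{1/4}},
\]
so the correct substitution to make in Lemma \ref{lem:AuxPolyNeg} is $A^4 = t$.

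Then I would apply this substitution term by term. The assumption $s=2n$ enters in only one place: $\abs{s}=2\abs{n}$ is even, so $(-1)^{\abs{s}}=1$, which collapses both outer sign factors in Lemma \ref{lem:AuxPolyNeg} and leaves a plain minus sign in front of the sum. The monomial prefactors translate as $A^{2(p-1)(q-1)-2\abs{s}} \mapsto t^{((p-1)(q-1)-\abs{s})/2}$ and $1-A^8 \mapsto 1-t^2$; the polynomials $X'_{p,q}$ and $X'_{p-2k,q-2\ell}$ become $J'_{p,q}$ and $J'_{p-2k,q-2\ell}$; and inside the sum, combining $A^{4i}$ with $A^{2-4\abs{s}+4(k+\ell-k\ell)+2(kq+\ell p)}$ and dividing the total exponent by $4$ yields $t^{\,i-\abs{s}+k+\ell-k\ell+(kq+\ell p+1)/2}$, exactly the exponent that appears in the theorem.

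The one genuine thing to verify — and the main (albeit small) obstacle — is that $(kq+\ell p+1)/2$ is an integer, i.e.\ that $kq+\ell p$ is odd. This is where coprimality of $p$ and $q$ is essential: exactly as in the proof of Lemma \ref{lem:relprime}, $\gcd(p,q)=1$ rules out the case $i=1$ in Lemma \ref{lem:k}, leaving $i\in\{0,2\}$, and then Corollary \ref{cor:ell} gives $qk-p\ell=\pm 1$. Since $kq+\ell p \equiv qk-p\ell \pmod 2$, the quantity $kq+\ell p$ is odd and the exponent is integral. With this parity check in hand, the substitution described above produces exactly the formula claimed, completing the proof.
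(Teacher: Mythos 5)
Your proposal is correct and matches the paper's proof, which likewise obtains the theorem from Lemma \ref{lem:AuxPolyNeg} by substituting $A = t^{1/4}$ (rather than the usual $t^{-1/4}$) to account for the mirror image. You simply carry out the exponent bookkeeping and the parity check on $kq+\ell p$ explicitly, which the paper leaves implicit.
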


\begin{proof}
The Jones polynomial is obtained from Lemma \ref{lem:AuxPolyNeg} by replacing $A$ with $t^{1/4}$. The typical change of variables replaces $A$ with $t^{-1/4}$, but here we must account for the fact that we have been working with the mirror image.
\end{proof}

\begin{lem}\label{lem:AuxPolyPos}
Let $p$, $q$ and $s$ be integers. Suppose $p\ge3$, $q \ge 1$,$s>0$, and $\gcd(p,q) = 1$.  Then the auxiliary polynomial $X(T\mi((p,q),(2,s)))$ is
\[\frac{(-1)^{s} A^{2(p-1)(q-1)+2s}}{1-A^{8}} \left[ X'_{p,q} - (-1)^{s}\sum_{i=0}^{s-1}(-1)^i A^{-4i} A^{4s-2 + 4(k+\ell-k\ell) + 2(kq + \ell p)}X'_{p-2k,q-2\ell} \right] \]
where $X'_{p,q} = 1 - A^{4p+4} - A^{4q+4} + A^{4p+4q}$ and $X'_{p-2k,q-2\ell} = 1 - A^{4p
+4 - 8k} - A^{4q+4 - 8\ell} + A^{4p+4q - 8k - 8\ell}$.
\end{lem}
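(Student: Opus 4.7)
The plan is to follow the template of Lemma \ref{lem:AuxPolyNeg} almost verbatim, updating the three places where the sign of $s$ enters the computation: the bracket formula from Lemma \ref{lem:TTLbracket}, the writhe of our diagram of $T\mi((p,q),(2,s))$, and the sign factored out of the summation before Lemma \ref{lem:BracketToAux} is applied.

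First, I would start from the $s>0$ branch of Lemma \ref{lem:TTLbracket} and rewrite the sign in the sum as $(-1)^{s-1-i}=(-1)^{s-1}(-1)^i$, which allows factoring out $(-1)^{s-1}A^{3s-2+6\ell}$ and leaves $\sum_{i=0}^{s-1}(-1)^iA^{-4i}$ inside. The diagram $D$ representing $T\mi((p,q),(2,s))$ for $s>0$ uses the block $\sigma_1^{-s}$ in place of the $\sigma_1^{\abs{s}}$ used when $s<0$, so its writhe is $w(D)=-(p-1)q-s$; the writhes $w(D_{p,q})=-(p-1)q$ and $w(D_{p-2k,q-2\ell})=-(p-2k-1)(q-2\ell)=-(p-1)q+2kq+2\ell p-2\ell-4k\ell$ remain identical to those used in the $s<0$ case.

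Next, I would apply Lemma \ref{lem:BracketToAux}. The factor $(-A)^{-3w(D)}(-A)^{3w(D_{p,q})}$ collapses to $(-A)^{3s}=(-1)^sA^{3s}$, producing a leading term $(-1)^sA^{2s}X_{p,q}$. In the second term, the factor $(-A)^{-3w(D)}(-A)^{3w(D_{p-2k,q-2\ell})}$ simplifies to $(-1)^sA^{6\ell p+6kq-12k\ell-6\ell+3s}$, and combining this with the previously factored $(-1)^{s-1}A^{3s-2+6\ell}$ yields an overall coefficient of $-A^{6\ell p+6kq-12k\ell+6s-2}$ in front of $\sum(-1)^iA^{-4i}X_{p-2k,q-2\ell}$.

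Finally, Lemma \ref{lem:relprime} ensures that $\gcd(p-2k,q-2\ell)=1$, so both $X_{p,q}$ and $X_{p-2k,q-2\ell}$ admit the torus knot expression $A^{2(m-1)(n-1)}X'_{m,n}/(1-A^8)$ after substituting $t=A^4$. Factoring the common $A^{2(p-1)(q-1)}/(1-A^8)$ out of both terms and using the identity $2(p-2k-1)(q-2\ell-1)-2(p-1)(q-1)=-4\ell p-4kq+8k\ell+4(k+\ell)$ to absorb the discrepancy in $A$-powers into the exponent of the summand produces the claimed expression. The hard part is purely the bookkeeping of signs and exponents across these substitutions; once the $s<0$ computation of Lemma \ref{lem:AuxPolyNeg} is reused with $\abs{s}\mapsto s$, $w(D)\mapsto -(p-1)q-s$, and $A^{4i}\mapsto A^{-4i}$, the remainder of the algebra is essentially mechanical.
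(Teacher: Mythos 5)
Your proposal is correct and matches the paper's intent exactly: the paper gives no separate proof of Lemma \ref{lem:AuxPolyPos}, stating only that it is identical in structure to that of Lemma \ref{lem:AuxPolyNeg}, and your adjustments (writhe $-(p-1)q-s$, the factored sign $(-1)^{s-1}$, and the $A^{-4i}$ in the sum) are precisely the right ones; the exponent bookkeeping you describe does reproduce $4s-2+4(k+\ell-k\ell)+2(kq+\ell p)$.
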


The proof of this lemma is identical in structure to the proof of Lemma \ref{lem:AuxPolyNeg}. 

\begin{thm}
Let $p$, $q$ and $s$ be integers. Suppose $p\ge3$, $q \ge 1$, $\gcd(p,q) = 1$,   and that $s=2n$ for some positive integer $n$. The Jones polynomial of $T((p,q),(2,s)) = T(p,q,2,n)$ is

$$\frac{ t^{\frac{(p-1)(q-1)+s}{2}}}{1-t^{2}} \left[J'_{p,q} - \sum_{i=0}^{s-1}(-1)^i t^{-i + s + k + \ell - k \ell + \frac{ kq+\ell p-1}{2}}J'_{p-2k,q-2\ell} \right]$$

where $J'_{p,q} = 1-t^{p+1}-t^{q+1}+t^{p+q}$ and $J'_{p-2k,q-2\ell} = 1-t^{p-2k+1}-t^{q-2\ell+1}+t^{p+q-2k - 2\ell}$.
\end{thm}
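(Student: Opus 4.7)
The plan is to derive this formula from Lemma \ref{lem:AuxPolyPos} by exactly the same substitution used in the proof of the negative-$n$ theorem. Because Lemma \ref{lem:AuxPolyPos} computes the auxiliary polynomial of the mirror $T\mi((p,q),(2,s))$, the Jones polynomial of $T((p,q),(2,s))$ is obtained by the non-standard substitution $A \mapsto t^{1/4}$ rather than $A \mapsto t^{-1/4}$, just as before.

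First I would record the simplifications coming from the hypothesis $s = 2n > 0$: since $s$ is even, both occurrences of $(-1)^s$ collapse to $+1$, matching the sign pattern in the claimed formula. I would then perform the substitution term by term. The outer quotient
\[\frac{A^{2(p-1)(q-1)+2s}}{1-A^{8}}\]
becomes $\dfrac{t^{\frac{(p-1)(q-1)+s}{2}}}{1-t^{2}}$, and $X'_{p,q}$, $X'_{p-2k,q-2\ell}$ become $J'_{p,q}$, $J'_{p-2k,q-2\ell}$ directly from their definitions, since every exponent of $A$ in those polynomials is a multiple of $4$.

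The remaining work sits inside the summation. The exponent of $A$ there is
\[-4i + 4s - 2 + 4(k+\ell-k\ell) + 2(kq + \ell p),\]
and after dividing by $4$ I would verify that it consolidates as
\[-i + s + k + \ell - k\ell + \frac{kq + \ell p - 1}{2},\]
which is exactly the exponent of $t$ stated in the theorem. The only slightly delicate bookkeeping is the half-integer contribution coming from $-2$ and from $2(kq+\ell p)$ when divided by $4$; these combine into the single half-integer shift $\frac{kq+\ell p - 1}{2}$.

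I do not expect a genuine obstacle; the proof is a one-step substitution, exactly parallel to the proof given for negative $n$. For completeness I would note that the hypothesis $\gcd(p,q) = 1$ enters only via Lemma \ref{lem:relprime}, which was already invoked inside Lemma \ref{lem:AuxPolyPos} to guarantee that the reduced pair $(p-2k, q-2\ell)$ defines an honest torus knot, so that Jones's formula for the Jones polynomial of a torus knot applies to $T\mi(p-2k, q-2\ell)$.
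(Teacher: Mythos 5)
Your proposal is correct and matches the paper's proof, which is exactly the one-line substitution $A \mapsto t^{1/4}$ applied to Lemma \ref{lem:AuxPolyPos} (with the sign of the exponent in the substitution justified by the mirror-image convention). Your exponent bookkeeping, including the half-integer shift $\frac{kq+\ell p-1}{2}$ and the collapse of $(-1)^s$ for even $s$, checks out.
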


\begin{proof}
The Jones polynomial is obtained from Lemma \ref{lem:AuxPolyPos} by replacing $A$ with $t^{1/4}$. 
\end{proof}

\section{Applications}\label{sec:apps}

In this section, we use the auxiliary polynomial from the previous section to relate these computations to work of Sangyop Lee. In \cite{LeeTTKUnknots} and \cite{LeeTTKTypes}, Lee finds the twisted torus knots that are unknotted. Here we re-state Lee's theorem for the case when $r = 2$ and for the twisted torus links we consider.

\begin{thm}\label{thm:LeesUnknots}
Let $p, q, s$ be integers with $p \ge 3$, $q \ge 1$ and $s$ even and negative, and let $p$ and $q$ be relatively prime. The twisted torus link $T((p, q), (2, s))$ is the unknot if and only if $(p,q,s)$ is one of the following:
\begin{enumerate}
    \item $(3, 2, -2)$;
    \item $(2m \pm 1, 2, -2m)$ for some integer $m \ge 2$;
    \item $(n, 1, -2)$ for some integer $n\ge 3$
\end{enumerate}
\end{thm}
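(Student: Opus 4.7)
The strategy is to use the explicit formula for the auxiliary polynomial given in Lemma \ref{lem:AuxPolyNeg} and analyze when the corresponding Jones polynomial reduces to $1$. Since $T((p,q),(2,s))$ is the unknot precisely when its Jones polynomial equals $1$, we can work entirely on the polynomial side. Throughout we have $s=-|s|$ with $|s|$ even, so signs $(-1)^{|s|}$ simplify to $+1$.

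For the \emph{if} direction, I would verify each of the three families by direct computation using Lemma \ref{lem:k} and Corollary \ref{cor:ell} to pin down $k$ and $\ell$, then substituting into the formula from Lemma \ref{lem:AuxPolyNeg}. For family (3), $(n,1,-2)$, we have $q=1=\hat q$, so $k=1$ (the congruence $1+k\equiv 2\pmod n$ is solved first by $k=1$) and Corollary \ref{cor:ell} gives $\ell=0$; the polynomial collapses because $T^*(n-2,1)$ is itself unknotted, and the sum telescopes against $X'_{p,q}$. For family (1), $(3,2,-2)$, a direct substitution with $k=\ell=1$ (so $p-2k=1$) reduces the bracketed expression to zero up to the $1-A^8$ factor. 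For family (2), $(2m\pm 1,2,-2m)$, we have $q=2$ and $\hat q=2$, and one computes $k=m$, $\ell=m-1$ (with $i=2$) or $k=m-1$, $\ell=m-1$ (with $i=0$) depending on the sign of $2m\pm 1$; in each case $q-2\ell\in\{0,2\}$ and the torus factor $T^*(p-2k,q-2\ell)$ is trivial (or splits as an unknot plus a circle), and again the explicit sum cancels $X'_{p,q}$ cleanly.

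For the \emph{only if} direction, I would write $X(T^*((p,q),(2,s))) = 1$ and multiply through by $1-A^8$. The left-hand side then becomes a polynomial identity of the form
\[
A^{2(p-1)(q-1)-2|s|}\Bigl[X'_{p,q} - \sum_{i=0}^{|s|-1}(-1)^i A^{4i}A^{c(p,q,k,\ell,s)} X'_{p-2k,q-2\ell}\Bigr] \;=\; 1-A^8,
\]
where $c(p,q,k,\ell,s)$ is the explicit exponent $2-4|s|+4(k+\ell-k\ell)+2(kq+\ell p)$. Because $X'_{p,q}=1-A^{4p+4}-A^{4q+4}+A^{4p+4q}$ is a $4$-term polynomial with distinct monomials (and similarly for $X'_{p-2k,q-2\ell}$ as long as $p-2k\ge 2$ and $q-2\ell\ge 1$), I would extract the lowest and highest degree monomials, together with the span of each torus-link contribution, and compare them to the two monomials of $1-A^8$. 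This matching forces either $X'_{p-2k,q-2\ell}$ to degenerate (i.e., $p-2k\le 1$ or $q-2\ell\le 0$, the cases where $T^*(p-2k,q-2\ell)$ is not a genuine torus knot), or $q\in\{1,2\}$ so that $X'_{p,q}$ already has few enough distinct monomials to match. Splitting on these degeneracies should yield exactly the three families listed.

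The hard part will be the converse: keeping track of the cancellations and showing that outside of the small-$q$ and small-$(q-2\ell)$ degenerate cases the alternating exponents $\{4i+c : 0\le i\le |s|-1\}$ cannot align with the four monomials of $X'_{p,q}$ to leave only $1-A^8$. I expect this to reduce to an explicit degree/span count using $|s|\le |s|$ and the inequality $2(p-1)(q-1)-2|s| \ge 0$ (with equality characterizing the boundary cases), supplemented by noting that when $p-2k=1$ the link $T^*(1,q-2\ell)$ is the unknot, which is precisely the mechanism producing families (1)--(3). All the remaining work is bookkeeping in polynomial exponents.
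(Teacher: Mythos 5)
The paper does not prove this statement at all: it is quoted verbatim from Lee's classification of unknotted twisted torus knots (\cite{LeeTTKUnknots}, \cite{LeeTTKTypes}), so there is no in-paper proof to compare against. Judged on its own terms, your proposal has one sound half and one fatal gap.

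The gap is in the \emph{if} direction. You propose to show that for each listed triple the Jones polynomial is trivial, and to conclude that the link is the unknot. But a trivial Jones polynomial does not certify unknottedness: whether the Jones polynomial detects the unknot is a well-known open problem, and the paper itself flags this at the end of Section \ref{sec:bg}. Your opening claim that ``$T((p,q),(2,s))$ is the unknot precisely when its Jones polynomial equals $1$'' is exactly the content of Proposition \ref{prop:LeeisRight}, which the paper proves \emph{using} Lee's theorem as an input, so you cannot assume it here. Establishing the \emph{if} direction requires genuine topology --- explicit isotopies or braid destabilizations for $(n,1,-2)$, $(3,2,-2)$, and $(2m\pm 1,2,-2m)$ --- which is what Lee's papers actually supply and what your polynomial computation cannot replace.

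The \emph{only if} direction of your argument is logically fine and not circular: if the link is the unknot then its Jones polynomial is $1$, and your proposed exponent analysis of the formula from Lemma \ref{lem:AuxPolyNeg} would then force $(p,q,s)$ into the list. This is precisely the ``necessary'' half of the paper's proof of Proposition \ref{prop:LeeisRight}. Be aware, though, that what you dismiss as ``bookkeeping in polynomial exponents'' occupies several pages there: one must first show the summation contributes at least $2(p-2k)+1$ uncancelled monomials unless $\min(p-2k,\,q-2\ell)\le 1$ or $q-2\ell = 0$, and then work through five separate degenerate cases (using Lemmas \ref{lem:kisqhat}, \ref{lem:lisphat}, and \ref{lem:smallcancellation}), each reducing to Diophantine constraints on $k$, $\ell$, $m$, and $\abs{s}$ that must be ruled out or matched to the listed families. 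Your sketch correctly locates where the degeneracies live but substantially underestimates that work, and in any case cannot close the \emph{if} direction.
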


In addition to the unknots listed above, Lee lists as an unknot the twisted torus knot $T(2,1,2,-1)$. Further, Lee's theorem is stated so that the roles of $p$ and $q$ can be swapped. In our current setting, it does not make sense to allow $p\le2$ because the definition of a twisted torus link $T((p_1, q_1),(p_2,q_2),\ldots,(p_l,q_l))$ requires that $p_{i} < p_{i-1}$ for $2 \le i \le l$. As such, twisted torus knots with $p\le 2$ do not have a representation as a twisted torus link. While our work does not apply to these knots, it is easy to see that the twisted torus knot $T(2,q,2,n)$ is isotopic to the torus knot $T(2,q+2n)$. Then $T(2,1,2,-1)$, $T(2, 3, 2, -1)$, and $T(2, 2m\pm 1, 2, -m)$ are all isotopic to one of the torus knots $T(2,\pm 1)$, which are both the unknot. Therefore, each of these twisted torus knots has a trivial Jones polynomial. 

\begin{prop} \label{prop:LeeisRight}
Let $p$, $q$ and $s$ be integers. Suppose $p\ge3$, $q \ge 1$, $\gcd(p,q) = 1$, and that $s$ is even and negative. The Jones polynomial of the twisted torus link $T((p,q),(2,s))$ is trivial if and only if $T((p,q),(2,s))$ is the unknot.
\end{prop}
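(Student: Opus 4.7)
The forward implication is immediate: if $T((p,q),(2,s))$ is the unknot, its Jones polynomial equals $1$ since the Jones polynomial is a knot invariant. For the converse, the plan is to exploit the explicit formula for the Jones polynomial of $T((p,q),(2,s))$ obtained in the preceding theorem. Setting that formula equal to $1$ and clearing the denominator $1-t^2$ yields the polynomial identity
\[
t^{\frac{(p-1)(q-1)-\abs{s}}{2}}\Bigl[J'_{p,q} - \Sigma(t)\, J'_{p-2k,\,q-2\ell}\Bigr] = 1 - t^2,
\]
where $\Sigma(t) = \sum_{i=0}^{\abs{s}-1}(-1)^i t^{\alpha_i}$ and $\alpha_i = i - \abs{s} + k + \ell - k\ell + (kq + \ell p + 1)/2$. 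Because $\abs{s}$ is even, $\Sigma(t)$ collapses to the closed form $t^{\alpha_0}(1-t^{\abs{s}})/(1+t)$, which is convenient for degree analysis.

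The argument then splits according to the remark following Corollary \ref{cor:ell}. In the first case, $\ell = 0$, forcing $q = k = 1$; hence $J'_{p,1} = J'_{p-2,1} = 1 - t^2$. The common factor $1-t^2$ cancels, leaving a relation in $t$ and $\abs{s}$ alone. Direct inspection of the resulting (telescoping) identity forces $\abs{s} = 2$, which recovers case 3 of Theorem \ref{thm:LeesUnknots}, namely $(n,1,-2)$ with $n \ge 3$.

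In the second case, $\ell \ge 1$ and hence $q \ge 2$. The four exponents appearing in $J'_{p,q}$ are $\{0,\,p+1,\,q+1,\,p+q\}$, while those in $J'_{p-2k,q-2\ell}$ are $\{0,\,p+1-2k,\,q+1-2\ell,\,p+q-2k-2\ell\}$. After multiplying by the prefactor and $\Sigma(t)$, the left-hand polynomial must collapse onto only the two monomials of $1-t^2$. Comparing extremal (top and bottom) monomials, using Lemma \ref{lem:relprime} to ensure that $(p-2k,q-2\ell)$ is coprime, and invoking Corollary \ref{cor:ell} to translate constraints on $k$ and $\ell$ back into arithmetic relations among $p,q,s$, the only surviving solutions are $q = 2$, $\ell = 1$, $p - 2k = 1$, and $\abs{s} = 2m$ with $p = 2m \pm 1$. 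These are precisely cases 1 and 2 of Theorem \ref{thm:LeesUnknots}.

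The main obstacle is the degree bookkeeping in the second case: one must rule out exotic cancellation patterns that could yield $V = 1$ outside the expected families. This is made tractable by the closed form for $\Sigma(t)$, which reduces the left-hand side to a linear combination of just two scaled copies of $J'_{p-2k,q-2\ell}$; the extremal terms of the difference $(1+t)J'_{p,q} - t^{\alpha_0}(1-t^{\abs{s}})J'_{p-2k,q-2\ell}$ then admit a direct comparison with those of $(1-t)(1+t)^2$, which suffices to pin down the parameters.
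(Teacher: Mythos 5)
Your overall architecture matches the paper's: sufficiency is immediate (or by direct substitution of Lee's parameters), and necessity is reduced to showing that the explicit polynomial formula can only collapse to a single monomial for the parameter families of Theorem \ref{thm:LeesUnknots}. Your Case 1 ($\ell = 0$, hence $q=k=1$) is essentially the paper's Case 1 and is fine. The genuine gap is your second case. You assert that ``comparing extremal (top and bottom) monomials \dots suffices to pin down the parameters'' and that the only surviving solutions have $q=2$, $\ell=1$, $p-2k=1$. This is not substantiated, and the paper's proof shows it cannot be settled by extremal-degree bookkeeping alone. The extremal comparison only yields a dichotomy of the form ``either the prefactor exponent vanishes or $\alpha_0$ (equivalently the paper's $\sigma$) vanishes''; within each branch one must still rule out \emph{interior} cancellations among the $4+4\abs{s}$ candidate monomials, and that is where all the work lies.

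Concretely, you are missing two things. First, an argument that $p-2k$ and $q-2\ell$ cannot both be large: the paper proves this by counting at least $2(p-2k)+1$ (and, when $p-2k=2$, at least $7$) uncancelled terms coming from the four degree-shifted copies of $\sum(-1)^iA^{4i}$, which forces $\min(p-2k,q-2\ell)\le 1$ or $q=2\ell$. Without this, your ``second case'' is not reduced to anything tractable. Second, the elimination of the cases $p-2k=q-2\ell=1$, $1=q-2\ell<p-2k$, and $1=p-2k<q-2\ell$: these do \emph{not} fall to extremal comparison. In each, the polynomial $X'_{p-2k,q-2\ell}$ degenerates to $1-A^8$, massive telescoping occurs (the paper's Lemma \ref{lem:smallcancellation}), and one is left with eight-term identities whose possible exponent matchings must be enumerated; the contradictions come from Diophantine conditions such as $k=\frac{\ell-1}{2\ell-1}$ having no integer solutions, or pairs of matchings jointly forcing $4=-8\ell$ or $8\ell=-8$. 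Your proposal never engages with these, so the converse direction is not proved. (Your closed form $\Sigma(t)=t^{\alpha_0}(1-t^{\abs{s}})/(1+t)$ is a reasonable packaging of the alternating sum, but it does not remove the need for this case analysis, since multiplying through by $1+t$ reintroduces cross terms between the four shifted copies of $J'_{p-2k,q-2\ell}$.)
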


Here we call $T((p,q),(2,s))$ a link to distinguish the phrases ``twisted torus link" and ``twisted torus knot," but it is actually a knot when $p$ and $q$ are coprime and $s$ is even.

Before we begin the proof of Proposition \ref{prop:LeeisRight}, we prove two lemmas that relate $k$ and $\ell$ to the congruence classes of $q\pmod p$ and $p \pmod q$, respectively. We will need this for the proof, but additionally, this information could be useful for applying this work to other circumstances.

\begin{lem}\label{lem:kisqhat}
When $p$ and $q$ are relatively prime, $k = \min(\hat{q}^{-1}, p - \hat{q}^{-1})$, where $q \equiv \hat{q} \mod p$ and $1 \le \hat{q} \le p-1 $.
\end{lem}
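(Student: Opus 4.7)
The plan is to unpack the three congruences appearing in Lemma \ref{lem:k} and solve each one explicitly, using the fact that $\hat{q}$ is invertible modulo $p$.

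By Lemma \ref{lem:k}, $k = \min(k_p, k_1, k_2)$, where $k_p$, $k_1$, $k_2$ are the smallest positive integers satisfying $1 + \hat{q} k_i \equiv i \pmod p$. Since $\gcd(p,q) = 1$ and $q = pm + \hat{q}$, we also have $\gcd(p,\hat{q}) = 1$, so $\hat{q}$ has a unique multiplicative inverse $\hat{q}^{-1} \pmod p$, which I take as a representative in $\{1,2,\ldots,p-1\}$. First I would handle each $k_i$ separately. The congruence for $k_1$ reduces to $\hat{q} k_1 \equiv 0 \pmod p$, forcing $k_1 \equiv 0 \pmod p$, so the smallest positive value is $k_1 = p$. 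The congruence for $k_2$ reduces to $\hat{q} k_2 \equiv 1 \pmod p$, giving $k_2 = \hat{q}^{-1}$. The congruence for $k_p$ reduces to $\hat{q} k_p \equiv -1 \pmod p$, giving $k_p \equiv -\hat{q}^{-1} \pmod p$, so $k_p = p - \hat{q}^{-1}$.

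Next I would observe that both $\hat{q}^{-1}$ and $p - \hat{q}^{-1}$ lie in $\{1,\ldots,p-1\}$, so both are strictly less than $k_1 = p$. Therefore $k_1$ is never the minimum, and
\[
k = \min(k_p, k_2) = \min(\hat{q}^{-1},\, p - \hat{q}^{-1}),
\]
which is the claimed formula. A quick sanity check confirms the two candidates cannot collide: $\hat{q}^{-1} = p - \hat{q}^{-1}$ would force $2\hat{q}^{-1} \equiv 0 \pmod p$, and since $\gcd(\hat{q}^{-1},p) = 1$ this requires $p \mid 2$, impossible for $p \ge 3$.

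There is no genuine obstacle here: the lemma is essentially a translation of Lemma \ref{lem:k} into the language of modular inverses. The only thing to be careful about is specifying that $k_i$ denotes the smallest positive solution (as implicit in the proof of Lemma \ref{lem:k}, where $k$ is the first index at which the isotopy terminates), which is what makes $k_1 = p$ rather than $0$.
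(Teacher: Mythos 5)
Your proof is correct and follows essentially the same route as the paper's: both reduce to solving $\hat{q}k \equiv \pm 1 \pmod p$ and taking the smaller of the two representatives in $\{1,\ldots,p-1\}$. The only cosmetic difference is how the middle congruence is dismissed --- you compute $k_1 = p$ directly from $\hat{q}k_1 \equiv 0 \pmod p$ and note it can never be the minimum, whereas the paper rules out $i=1$ via the equation $q\cdot k - p\cdot\ell = i-1$ from Corollary \ref{cor:ell} together with $p\ell - qk \neq 0$.
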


\begin{proof}
When $p$ and $q$ are relatively prime, as $k<p$ and $\ell<q$, we must have $p\ell - qk \neq 0$. This means, then that the $i$ in Corollary \ref{cor:ell} is either 0 or 2, and we have, after some rearranging,
\[q\cdot k - p\cdot\ell= \pm 1.\]
Now, taking things modulo $p$, we get:
\[\hat{q}\cdot k \equiv \pm 1 \pmod p \implies k \equiv \pm \hat{q}^{-1}\pmod p. \]
As $0\le k<p$, we have exactly two options. If $i=2$ and $k\equiv \hat{q}^{-1}\pmod p$, then $k=\hat{q}^{-1}$. Otherwise, $i=0$ and $k\equiv -\hat{q}^{-1}\pmod p$, and so $k=p-\hat{q}^{-1}$. As $k$ is defined to be the first solution to the equation in Lemma \ref{lem:k}, $k$ must be the smaller of these two values, giving us our result.
\end{proof}

Note that one consequence of this lemma is that $2k < p$.

\begin{lem}\label{lem:lisphat}
When $p$ and $q$ are relatively prime, $\ell = \hat{p}^{-1}$ or $\ell = q- \hat{p}^{-1}$, where $p \equiv \hat{p} \pmod q$ and $1\le \hat{p} \le q-1$.
\end{lem}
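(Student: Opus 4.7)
The plan is to mirror the proof of Lemma \ref{lem:kisqhat} verbatim, only now reducing modulo $q$ rather than modulo $p$. First I would invoke Corollary \ref{cor:ell}, which gives $p\ell + i - 1 = qk$ for some $i \in \{0, 1, 2\}$. The coprimality argument opening Lemma \ref{lem:kisqhat} already rules out $i=1$: that case would force $\hat{q}k \equiv 0 \pmod p$ with $0 < k < p$, contradicting $\gcd(p,q)=1$. So $i \in \{0, 2\}$, and rearranging the corollary yields $p\ell - qk = \pm 1$.

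Next I would reduce this identity modulo $q$. Writing $p \equiv \hat{p} \pmod q$ gives $\hat{p}\,\ell \equiv \pm 1 \pmod q$, and since $\gcd(\hat{p}, q) = \gcd(p,q) = 1$ the inverse $\hat{p}^{-1}$ exists modulo $q$, so $\ell \equiv \pm \hat{p}^{-1} \pmod q$. To land the result I then need $\ell$ to sit in an interval of length $q$ that meets each of the classes $\pm \hat{p}^{-1}$ exactly once. The upper bound $\ell < q$ follows from the note after Lemma \ref{lem:kisqhat} that $2k < p$: solving $p\ell = qk \pm 1$ gives $\ell \le (q(p-1)/2 + 1)/p < q$. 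The lower bound $\ell \ge 1$ is handled by the remark following Corollary \ref{cor:ell}, which shows $\ell = 0$ only when $q = k = 1$, together with the implicit constraint $q \ge 2$ coming from $1 \le \hat{p} \le q-1$.

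Therefore $1 \le \ell \le q-1$, and combined with $\ell \equiv \pm \hat{p}^{-1} \pmod q$ the only two possibilities are $\ell = \hat{p}^{-1}$ and $\ell = q - \hat{p}^{-1}$, as claimed. The only real obstacle is the size bookkeeping in the final step: making sure that $2k < p$ from Lemma \ref{lem:kisqhat} is strong enough to force $\ell < q$, and that the degenerate $q=1$ case is genuinely excluded by the statement's hypotheses. The modular-inversion step itself is a carbon copy of the previous lemma.
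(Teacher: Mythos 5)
Your proposal is correct and follows essentially the same route as the paper: rule out $i=1$ via coprimality, reduce $p\ell - qk = \pm 1$ modulo $q$, and pin $\ell$ down using $1 \le \ell < q$. The only difference is that you actually justify the bound $1 \le \ell \le q-1$ (via $2k < p$ and the exclusion of $q=1$), whereas the paper simply asserts it; that extra bookkeeping is sound and, if anything, an improvement.
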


\begin{proof}
As above, when $p$ and $q$ are relatively prime, the $i$ from Corollary \ref{cor:ell} is either 0 or 2. As such, we get:
\[p\cdot \ell \pm 1 = q\cdot k.\]
Rearranging, and taking both sides modulo $q$, we get
\[p\cdot \ell \equiv \pm 1 \pmod{q}.\]
Let $\hat{p} = p\pmod q$. Then, using the same proof as Lemma \ref{lem:kisqhat}, and the fact that $1\le \ell< q$, we get our result.


\end{proof}

The following lemma is used several times in the proof of Proposition \ref{prop:LeeisRight}.

\begin{lem}\label{lem:smallcancellation}
Let $p$, $q$, $s$ be integers with $p\ge3$, $q\ge 1$, $\gcd(p,q) = 1$, and $s$ even and negative. When $X'_{p-2k,q-2\ell} = 1-A^8$, the auxiliary polynomial is \[\frac{ A^{2(p-1)(q-1)-2\abs{s}}}{1-A^{8}} \left[ 1 - A^{4p+4} - A^{4q+4} + A^{4p+4q} - A^{\sigma} + A^{\sigma + 4} + A^{\sigma + 4\abs{s}} - A^{\sigma + 4\abs{s}+4} \right], \] where $\sigma = 2 - 4\abs{s} + 4(k + \ell - k \ell) + 2 (kq + \ell p )$.
\end{lem}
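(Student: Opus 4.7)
The plan is to start from the formula for the auxiliary polynomial in Lemma \ref{lem:AuxPolyNeg}, substitute the hypothesis $X'_{p-2k,q-2\ell} = 1-A^{8}$, and then collapse the resulting geometric-type sum.

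First, I would use the parity of $s$ to simplify signs. Since $s$ is even, $\abs{s}$ is even, so $(-1)^{\abs{s}} = 1$. The expression from Lemma \ref{lem:AuxPolyNeg} therefore reduces to
\[
\frac{A^{2(p-1)(q-1)-2\abs{s}}}{1-A^{8}}\left[X'_{p,q} - A^{\sigma}\,(1-A^{8})\sum_{i=0}^{\abs{s}-1}(-1)^{i}A^{4i}\right],
\]
where $\sigma = 2 - 4\abs{s} + 4(k+\ell-k\ell) + 2(kq+\ell p)$, after pulling the factor $A^{\sigma}X'_{p-2k,q-2\ell} = A^{\sigma}(1-A^{8})$ out of every term of the sum.

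Next I would evaluate the finite geometric sum $\sum_{i=0}^{\abs{s}-1}(-A^{4})^{i}$. With $\abs{s}$ even we have $(-A^{4})^{\abs{s}} = A^{4\abs{s}}$, so the sum equals $\tfrac{1 - A^{4\abs{s}}}{1+A^{4}}$. Multiplying by $1-A^{8} = (1-A^{4})(1+A^{4})$ cancels the denominator and yields the clean product
\[
(1-A^{8})\sum_{i=0}^{\abs{s}-1}(-1)^{i}A^{4i} = (1-A^{4})(1-A^{4\abs{s}}) = 1 - A^{4} - A^{4\abs{s}} + A^{4\abs{s}+4}.
\]
This collapse of the geometric series against $1-A^{8}$ is the main computational maneuver; once that identity is in hand the rest is bookkeeping.

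Finally I would substitute this back and expand $X'_{p,q} = 1 - A^{4p+4} - A^{4q+4} + A^{4p+4q}$, distributing the $-A^{\sigma}$ over the four-term binomial in $A^{\sigma}$. The four new terms are $-A^{\sigma} + A^{\sigma+4} + A^{\sigma+4\abs{s}} - A^{\sigma+4\abs{s}+4}$, which, together with $X'_{p,q}$, give exactly the eight-term bracketed expression in the statement. Putting this back over $1-A^{8}$ with the prefactor $A^{2(p-1)(q-1)-2\abs{s}}$ produces the desired formula and completes the proof.
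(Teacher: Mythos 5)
Your proposal is correct and follows essentially the same route as the paper: substitute $X'_{p-2k,q-2\ell}=1-A^{8}$ into Lemma \ref{lem:AuxPolyNeg}, use the evenness of $s$ to kill the signs, and collapse the sum against $1-A^{8}$. The only difference is cosmetic --- you evaluate $(1-A^{8})\sum_{i=0}^{\abs{s}-1}(-A^{4})^{i}$ in closed form as $(1-A^{4})(1-A^{4\abs{s}})$, whereas the paper obtains the same four surviving terms by a term-by-term telescoping cancellation; both yield the identical bracketed expression.
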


\begin{proof}
Assuming the given hypotheses, the sum portion of the auxiliary polynomial from Lemma \ref{lem:AuxPolyNeg} is $\displaystyle \sum_{i=0}^{\abs{s}-1}(-1)^i A^{4i} A^{\sigma}(1-A^8)$ with $\sigma = 2-4\abs{s} + 4(k+\ell-k\ell) + 2(kq + \ell p)$. 

Because the exponent of each term of the sum is 4 greater than the previous exponent, the $j$ term of $\displaystyle \sum_{i=0}^{\abs{s}-1}(-1)^i A^{4i} A^{\sigma}$ cancels with the $j-2$ term of $\displaystyle \sum_{i=0}^{\abs{s}-1}(-1)^i A^{4i} A^{\sigma}(- A^8)$, for $2\le j\le \abs{s}-1$. This leaves the $i=0$ and $i=1$ terms of $\displaystyle \sum_{i=0}^{\abs{s}-1}(-1)^i A^{4i} A^{\sigma}$ and the $i=\abs{s}-2$ and $i = \abs{s}-1$ terms of $\displaystyle \sum_{i=0}^{\abs{s}-1}(-1)^i A^{4i} A^{\sigma}(- A^8)$.

Using the hypothesis that $s$ is even, the sum becomes $A^{\sigma} - A^{\sigma + 4} - A^{\sigma + 4\abs{s} - 8 + 8} + A^{\sigma + 4 \abs{s} -4 + 8}$. Simplifying this expression and substituting it for the equivalent expression in the auxiliary polynomial, we arrive at the result.
\end{proof}

\begin{proof}[Proof of Proposition \ref{prop:LeeisRight}]
First, note that the Jones polynomial equals 1 if and only if the auxiliary polynomial equals 1. Here we work with the auxiliary polynomial for ease of notation. Further, for $K$ a knot with mirror image $K\mi$, the auxiliary polynomial $X(K)$ is obtained from $X(K\mi)$ by negating the exponents. Hence $X(K\mi) = 1$ if and only if $X(K) = 1$, so we can use the formulas derived in the previous section.

By plugging the parameters from Theorem \ref{thm:LeesUnknots} into the formula for the auxilliary polynomial from Lemma \ref{lem:AuxPolyNeg}, we see that each of these knots have trivial auxilliary polynomial, and thus trivial Jones polynomial. Hence, a twisted torus knot having these parameters is sufficient for having a trivial Jones polynomial. We proceed with proving that $T((p,q),(2,s))$ having $(p,q,s)$ matching these parameters is necessary for having trivial Jones polynomial.

If we are looking for a trivial auxiliary polynomial, we should expect the polynomial to have a single term. As such, with some rearranging of our equation, we will get, for some exponent $\alpha$:
\[1-A^8 =  A^{2(p-1)(q-1)-2\abs{s}} \left[X'_{p,q} - \sum_{i=0}^{\abs{s}-1}(-1)^i A^{4i+\alpha} X'_{p-2k,q-2\ell}\right].\]
Thus, to prove the auxiliary polynomial is nontrivial, it is enough to show that, after any potential cancellations, $X'_{p,q} - \sum (-1)^i A^{4i+\alpha} X'_{p-2k,q-2\ell}$ has at least three terms.

While it is tempting to analyze all different situations carefully, we will take a bit cruder, but more effective, approach. As $X'_{p,q}$ has four terms, if we can show that the second part, the summation, has at least seven terms, we will be done.

Focus on just the summation coming from $T\mi(p-2k, q-2\ell)$:
\[ \left(\sum_{i=0}^{\abs{s}-1} (-1)^i A^{4i+\alpha}\right)\left(1-A^{4p+4-8k}-A^{4q+4-8\ell}+A^{4p+4q-8k-8\ell}\right).\]
We can view this sum as four different ``degree shifts'' of the same polynomial, namely $\sum (-1)^i A^{4i+\alpha}$. 

For now, suppose $p-2k<q-2\ell$, and write out the polynomials coming from $1$ (a degree shift of 0) and $A^{4p+4-8k}$ (a degree shift of $4p+4-8k$):
\begin{center}
\begin{tabular}{c || c c c c c c }
    $1$                 & $A^{\alpha}$ & $-A^{\alpha+4}$ & $\cdots$ & $(-1)^{p+1-2k} A^{\alpha+4(p+1-2k)}$ & $(-1)^{p+1-2k+1} A^{\alpha+4(p+1-2k+1)}$ & $\cdots$\\
    $A^{4p+4-8k}$       & & & & $-A^{\alpha+4p+4-8k}$ & $+A^{\alpha+4p+4-8k+4}$ & $\cdots$ \\
\end{tabular}
\end{center}
While we can write the other two polynomials in a similar fashion, the first terms will all be ``above'' the first term of $A^{4p+4-8k}$. As such, the polynomial coming from $1$ will have several terms free until it starts to interact with the polynomial coming from $A^{4p+4-8k}$. In particular, the first time these two polynomials can interact is when $i=p-2k+1$ in the $1$ polynomial and $i=0$ in the $A^{4p+4-8k}$ polynomial. As such, there are at least $p-2k+1$ uncancelled terms (with the $+1$ coming from the fact that we start at $0$ in the sum) in the $1$ polynomial.

On the other hand, look at the polynomials coming from $A^{4q+4-8\ell}$ and $A^{4p+4q-8k-8\ell}$. In particular, we are looking for the \textit{last} time the two can interact. For the $A^{4q+4-8\ell}$ polynomial, this will be when $i=\abs{s}-1$, with exponent $4q+4-8\ell+4\abs{s}-4$. For the $A^{4p+4q-8k-8\ell}$ polynomial, then, we will have 
\begin{align*}
    4p+4q-8k-8\ell + 4i &= 4q+4-8\ell+4\abs{s}-4\\
    4i &= 4\abs{s}-4-4p+8k\\
     i &=  \abs{s}-1-p+2k.
\end{align*}
This means, then, that the $A^{4p+4q-8k-8\ell}$ polynomial has at least $p-2k$ terms that don't interact with any other polynomial.

Putting this all together, we will get at least $2(p-2k)+1$ uncancelled terms in this polynomial. If $p-2k\ge 3$, then we must be done, as $2\cdot 3+1 = 7$, the minimal number of terms we need in the sum to have nontrivial polynomial. If $p-2k=2$, then we have at least $2\cdot 2+1 = 5$ terms, not quite enough. However, we can find a couple more uncancelled terms here. 

Note that $(-1)^{p+1-2k} = -1$. As such, the $i=p+1-2k$ term in the $1$ polynomial does not cancel with the $i=0$ term in the $A^{4p+4-8k}$ polynomial. In fact, as the signs match up, none of the terms in these two polynomials will cancel! The first time, then, we could have cancelled terms is either after we reach the end of the $1$ polynomial (and so have $\abs{s}$ terms), or when both the $A^{4q+4-8\ell}$ and the $A^{4p+4q-8k-8\ell}$ polynomials are interacting. As $p-2k=2$, the first time the $A^{4p+4q-8k-8\ell}$ polynomial can interact with the others, the exponent is $4q-8\ell+8$. Then the earliest term in the $1$ polynomial that can cancel is when $4i=4q-8\ell+8$, and so $i=q-2\ell+2$. As $2=p-2k<q-2\ell$, this means $i$ is at least $5$, and so there are at least 5 terms in the $1$ polynomial that don't cancel. Combined with the 2 terms in the $A^{4p+4q-8k-8\ell}$ polynomial, this gives us the required 7 terms to be nontrivial.

Since the roles of $p-2k$ and $q - 2\ell$ can be reversed in the above argument, we know that at least one of $p-2k$ and $q-2\ell$ is strictly smaller than 2. Since $p \ge 3$ and $2k < p$, we know that $p-2k \ge 1$. We have the following five cases to consider:
\begin{enumerate}
\item\label{case:qis1} $q=1$ and $\ell = 0$,
\item\label{case:qis2} $q -2\ell = 0$,
\item\label{case:p1q1} $p-2k = q-2\ell = 1$,
\item\label{case:q1pbig} $1 = q - 2\ell < p - 2k$, and
\item\label{case:p1qbig} $1 = p-2k < q - 2\ell$.
\end{enumerate}

In cases \ref{case:p1q1}, \ref{case:q1pbig}, and \ref{case:p1qbig}, we may assume $q \ge 3$.

\textit{Case \ref{case:qis1}}: We begin by considering the case where $q = 1$. We have $k=1$ and $\ell=0$, so the polynomial from Lemma \ref{lem:AuxPolyNeg} is $\displaystyle \frac{A^{-2\abs{s}}}{1-A^{8}} \left[1-A^8 - \sum_{i=0}^{\abs{s}-1}(-1)^i A^{4i} A^{8-4\abs{s}} (1-A^8) \right]$, which is equal to $\displaystyle A^{-2\abs{s}}\left(1 - \sum_{i=0}^{\abs{s}-1}(-1)^i A^{4i} A^{8-4\abs{s}}\right) = A^{-2\abs{s}} - \sum_{i=0}^{\abs{s}-1}(-1)^i A^{4i} A^{8-6\abs{s}} $. In order for this polynomial to be 1, some term must have exponent 0. Then it must be the case that $8-6\abs{s} + 4i = 0$ for some $0 \le i \le \abs{s}-1$. However this would mean that $4i = 6\abs{s} - 8$. Since we know $0 \le 4i \le 4\abs{s} - 4$, we have that $4i \le 6\abs{s} -8$ with equality if and only if $\abs{s} = 2$. When $\abs{s} = 2$, the auxiliary polynomial is trivial. Notice that we have provided no restrictions on $p$ here, and so this corresponds to the family of twisted torus links of the form $T((p,1),(2,-2))$, which is the family given in Theorem \ref{thm:LeesUnknots}\textit{(3)}.

\textit{Case \ref{case:qis2}}: Next we assume that $q = 2\ell$. As a consequence of Lemma $\ref{lem:lisphat}$, $q$ and $\ell$ are coprime, so $\ell=1$ and $q=2$. The equation from Corollary \ref{cor:ell} tells us that $p = 2k -i + 1$, where $i \in \{0,2\}$. That is, $p = 2k \pm 1$. 

First consider $p = 2k + 1$. Using Lemma \ref{lem:smallcancellation}, with $\sigma = 8 - 4\abs{s} + 8k$, we write the auxiliary polynomial as
\[ \frac{A^{4k-2\abs{s}}}{1-A^{8}} \left[ 1-A^{8k+8} - A^{12} + A^{8k+12} -  A^{8-4\abs{s} + 8k} + A^{12 - 4\abs{s} + 8k} + A^{8k+8} - A^{8k+12} \right] \]

After further cancellation, we have $\displaystyle \frac{A^{4k-2\abs{s}}}{1-A^{8}} \left[ 1 - A^{12}  -  A^{8-4\abs{s} + 8k} + A^{12 - 4\abs{s} + 8k} \right]$. In order for this polynomial to be trivial, it must be that the exponent of $A$ outside the brackets is zero or the $A^{0}=1$ term inside the brackets cancels with another term. Only $A^{8-4\abs{s}+8k}$ can cancel with the 1. That is,  $4k - 2\abs{s} = 0$ or $8-4\abs{s} + 8k = 0$, so that $\abs{s}$ is either $2k$ or $2(k+1)$. From Theorem \ref{thm:LeesUnknots}\textit{(2)}, the former corresponds to $(p,q,s) = (2m+1, 2, -2m)$, where $m = k$ and the latter corresponds to $(p,q,s) = (2m - 1, 2, -2m)$, where $m = k+1$. Notice that when $m = k = 1$, we have case \textit{(1)} from Theorem \ref{thm:LeesUnknots}.

Next consider $p = 2k-1$. The auxiliary polynomial is \[\frac{ A^{2(2k-2)-2\abs{s}}}{1-A^{8}} \left[ 1-A^{8k} - A^{12}+A^{8k+4} - \sum_{i=0}^{\abs{s}-1}(-1)^i A^{4i} A^{4-4\abs{s}  + 8k}(-A^{4}+A^{-4}) \right] .\]
We factor an $A^{-4}$ from $(-A^{4}+A^{-4})$ to obtain:
\[\frac{ A^{2(2k-2)-2\abs{s}}}{1-A^{8}} \left[ 1-A^{8k} - A^{12}+A^{8k+4} - \sum_{i=0}^{\abs{s}-1}(-1)^i A^{4i} A^{-4\abs{s}  + 8k}(1-A^8) \right] .\]

As in Lemma \ref{lem:smallcancellation}, the polynomial becomes

\[\frac{ A^{2(2k-2)-2\abs{s}}}{1-A^{8}} \left[ 1-A^{8k} - A^{12}+A^{8k+4} - A^{-4\abs{s} + 8k} + A^{4-4\abs{s}+ 8k} + A^{8k} -A^{8k+4} \right]. \]

After combining like terms, we have $\displaystyle \frac{ A^{2(2k-2)-2\abs{s}}}{1-A^{8}} \left[ 1 - A^{12} - A^{-4\abs{s} + 8k} + A^{4-4\abs{s}+ 8k} \right]$.
In order for this polynomial to be 1, either $2(2k-2) -2\abs{s} = 0$ or $-4|s| + 8k = 0$. Then we have that either $\abs{s}= 2(k-1)$ or $\abs{s} = 2k$.   These correspond to case \textit{(2)} of Theorem \ref{thm:LeesUnknots} (or case \textit{(1)} if $\abs{s} = k = 2$). 

Now we have completely analyzed the cases where $q = 1$ and $q = 2$. For the remaining cases, we assume $q$ is at least 3.

\textit{Case \ref{case:p1q1}}: Next, we consider the case that $p-2k = q - 2\ell = 1$. Then $p = 2k+1$ and $q= 2 \ell + 1$, where each of $k$ and $\ell$ is at least 1. Using Lemma \ref{lem:smallcancellation}, the auxiliary polynomial becomes \[\frac{A^{8k\ell - 2 \abs{s}}}{1 - A^8}\left[1 - A^{8k+8} - A^{8\ell + 8} + A^{8k + 8 \ell + 8} - A^{\sigma} + A^{\sigma + 4} + A^{\sigma + 4 \abs{s}} - A^{\sigma+ 4\abs{s} + 4}\right],\] where $\sigma = 2 - 4 \abs{s} + 6k + 6 \ell + 4 k \ell$. This polynomial is equal to 1 if and only if \[1 - A^{8k+8} - A^{8\ell + 8} + A^{8k + 8 \ell + 8} - A^{\sigma} + A^{\sigma + 4} + A^{\sigma + 4 \abs{s}} - A^{\sigma+ 4\abs{s} + 4} = A^{2\abs{s}-8 k \ell}(1-A^8).\]

Because $k,\ell \ge 1$, this can only happen if either $2\abs{s}- 8k \ell = 0$ or $\sigma = 0$. In the former case, the left side of the equation becomes \[1 - A^{8k+8} - A^{8\ell + 8} + A^{8k + 8 \ell + 8} - A^{2 + 6k + 6 \ell - 12 k \ell} + A^{6 + 6k + 6 \ell- 12 k \ell} + A^{2 + 6k + 6 \ell+ 4 k \ell} - A^{6 + 6k + 6 \ell+ 4 k \ell}.\]

Here, the only exponents that could be equal to 8 are $6 + 6k + 6 \ell- 12 k \ell$ and $2 + 6k + 6 \ell - 12 k \ell$. Because it has 6 as a factor, $6 + 6k + 6 \ell- 12 k \ell$ cannot be equal to 8. If $2 + 6k + 6 \ell - 12 k \ell = 8$, we can write $k = \frac{\ell -1}{2\ell - 1}$. We see that $k$ is an increasing function of $\ell$ that limits to $1/2$ as $\ell$ approaches infinity. Since $k$ and $\ell$ are both integers that are at least 1, there are no solutions to this equation.

Since none of the terms can have an exponent equal to 8, it must be that $\sigma = 0$, so that $4\abs{s} = 2 + 6k + 6\ell + 4k \ell$. Then the left side of the equation becomes 
\[- A^{8k+8} - A^{8\ell + 8} + A^{8k + 8 \ell + 8} + A^{ 4} + A^{2 + 6k + 6\ell + 4k \ell} - A^{6 + 6k + 6\ell + 4k \ell} .\]

Because $A^4$ appears with a coefficient of $+1$ on the left of the equation, we must have that $2\abs{s}  =  8k\ell +4$. Using the two different expressions for $4\abs{s}$, we again have the equation $k = \frac{\ell -1}{2\ell - 1}$, which has no integral solutions.

We have found that no examples of twisted torus knots with trivial Jones polynomial arise from the case $p - 2k = q- 2\ell = 1$.

\textit{Case \ref{case:q1pbig}}: Suppose that $1 =  q-2\ell < p-2k$ and $\ell \ge 1$. From this equation $2\ell \equiv -1 \pmod q$. Let $i$ be that introduced in Lemma \ref{lem:k}. 

From Lemma \ref{lem:lisphat}, when $i = 0$, $\ell = \hat{p}^{-1}$, so we have $\hat{p} \equiv {-2} \pmod q$. Then $p  = m (2\ell+1) -2$ for some integer $m \ge 1$. Note here that because $p\ge 3$, $(\ell,m) \neq (1,1)$. From Corollary \ref{cor:ell}, $\ell (m(2\ell +1) - 2) + 0 - 1 = k (2\ell + 1)$. We can rearrange this equation to see that $(\ell m -k - 1) (2 \ell + 1)= 0$. Since $2 \ell + 1$ is not zero, we have that $k = \ell m - 1$. We use these new expressions of the parameters to write the auxiliary polynomial as:
\[\frac{ A^{4\ell(2\ell m+m -3)-2\abs{s}}}{1-A^{8}} \left[ X'_{2\ell m+m -2,2\ell + 1} - \sum_{i=0}^{\abs{s}-1}(-1)^i A^{4i} A^{-4-4\abs{s} + 8\ell m   + 4 \ell^2 m   }X'_{m,1} \right]. \] 

Because $X'_{m, 1} = 1 - A^{8}$, we use Lemma \ref{lem:smallcancellation} to say the auxiliary polynomial is equal to 1 exactly when \[1 - A^{8\ell m +4m -4} - A^{8\ell+8} + A^{8\ell m + 4m + 8\ell - 4} - A^{-4-4\abs{s} + 8\ell m   + 4 \ell^2 m } \hspace{2in} \]\[ \hspace{1in} + A^{-4\abs{s} + 8\ell m   + 4 \ell^2 m }  + A^{-4 + 8\ell m   + 4 \ell^2 m }  - A^{ 8\ell m   + 4 \ell^2 m } = A^{2\abs{s}-4\ell(2\ell m+m -3)}(1-A^{8}).\] 
Notice that because $\ell \ge 1$, $m \ge 1$ and $(\ell,m) \neq (1,1)$, the only exponents that could possibly be 0 on the left are those in which $\abs{s}$ appears. Further, in order for this equation to hold, either $2\abs{s}-4\ell(2\ell m+m -3) = 0$ or $-4-4\abs{s} + 8\ell m   + 4 \ell^2 m = 0$. In the former case, $\abs{s} = 2\ell (2\ell m+ m -3)$ and in the latter, $\abs{s} = \ell^2 m + 2\ell m -1$.

When $\abs{s} = 2\ell (2\ell m+ m -3)$, the equation is \[1 - A^{8\ell m +4m -4} - A^{8\ell+8} + A^{8\ell m + 4m + 8\ell - 4} - A^{-4-12\ell^2 m +24\ell} \hspace{.7in} \]\[ \hspace{1in}+ A^{- 12 \ell^2 m  +24 \ell }  + A^{-4 + 8\ell m   + 4 \ell^2 m }   - A^{ 8\ell m   + 4 \ell^2 m } = 1-A^8.\] Since $-A^8$ appears on the right, it must be the case that $-4-12\ell^2 m + 24 \ell = 8$ because the other terms with $-1$ as a coefficient have exponents that are at least 16. Then $\ell(2 - \ell m) = 1$, which means $(\ell,m) = (1,1)$, a contradiction.

When $\abs{s} = \ell^2 m + 2\ell m -1$, the equation becomes  \[ - A^{8\ell m +4m -4} - A^{8\ell+8} + A^{8\ell m + 4m + 8\ell - 4} + A^{4} + A^{-4 + 8\ell m   + 4 \ell^2 m } - A^{ 8\ell m   + 4 \ell^2 m } = A^{4 - 6 \ell^{2} m + 12 \ell}(1-A^{8}).\] Since $A^{4-6\ell^2 m +12 \ell}$ appears on the right, the exponent must be equal to $8 \ell m + 4m + 8 \ell -4$, $4$ or $ -4+ 8 \ell m + 4 \ell^2 m$. Additionally, $8-6\ell^2 m +12 \ell$ must be in the set $\{8 \ell m + 4m - 4, 8 \ell + 8, 8 \ell  + 4 \ell^2 m \}$. By analyzing each case, we find that $4- 6\ell^2 m + 12\ell = 8 \ell m + 4m + 8 \ell -4$ and $8-6\ell^2 m +12 \ell = 8 \ell m + 4m - 4$ are the only cases that do not, when considered individually, provide a contradiction. However, in conjunction, we find that these equations require that $4 = -8\ell$, a contradiction. 

Now we have exhausted the $i = 0$ case, so it must be that $i = 2$.

When $i = 2$, $\ell \equiv - \hat p^{-1} \pmod q$, so $2 \ell \equiv - 1 \pmod q$ tells us that $p \equiv 2 \pmod q$. That is $p = m(2\ell +1) + 2$ for some integer $m \ge 1$. (Here $m$ and $\ell$ can both be 1 without violating any restrictions on $p$.) Similarly to above, we find that $k = \ell m + 1$, and we rewrite the auxliary polynomial as \[\frac{ A^{4\ell(2\ell m+m +1)-2\abs{s}}}{1-A^{8}} \left[ X'_{2\ell m + m + 2,2 \ell + 1} - \sum_{i=0}^{\abs{s}-1}(-1)^i A^{4i} A^{8-4\abs{s} + 8 \ell + 8\ell m   + 4 \ell^2 m  }X'_{m,1} \right] \] 

Because $X'_{m, 1} = 1 - A^{8}$, Lemma \ref{lem:smallcancellation} tells us the auxiliary polynomial is 1 exactly when \[1 - A^{8\ell m +4m + 12} - A^{8\ell+8} + A^{8\ell m + 4m + 8\ell + 12} - A^{8-4\abs{s} + 8 \ell + 8\ell m   + 4 \ell^2 m} \hspace{2in} \]\[\hspace{.2in}+ A ^{12-4\abs{s} + 8 \ell + 8\ell m   + 4 \ell^2 m} + A^{8 + 8 \ell + 8\ell m   + 4 \ell^2 m} - A^{12 + 8 \ell + 8\ell m   + 4 \ell^2 m} = A^{2\abs{s} - 4 \ell(2\ell m + m + 1)}(1 - A^{8}).\] As in the previous cases, analyzing terms reveals we must have $2\abs{s} - 4 \ell (2 \ell m + m + 1) = 0$ or $8 - 4 \abs{s}+8 \ell + 8 \ell m + 4 \ell^2 m = 0$. In the former cases $\abs{s} = 2\ell (2 \ell m + m + 1)$ and in the latter $\abs{s} = \ell^2 m + 2\ell + 2\ell m + 2$.

When $\abs{s} = 2\ell (2 \ell m + m + 1)$ , our equation becomes \[1 - A^{8\ell m +4m + 12} - A^{8\ell+8} + A^{8\ell m + 4m + 8\ell + 12} - A^{8-12 \ell^2 m } \hspace{1in} \]\[\hspace{0.1in}+ A ^{12-12 \ell^2 m}+ A^{8 + 8 \ell + 8\ell m   + 4 \ell^2 m} - A^{12 + 8 \ell + 8\ell m   + 4 \ell^2 m} = 1 - A^{8}.\] Since none of the exponents on the left side of the equation can be equal to 8, this case is impossible. 

When $\abs{s} = \ell^2 m + 2\ell + 2\ell m + 2$, we have 
\[ - A^{8\ell m +4m + 12} - A^{8\ell+8} + A^{8\ell m + 4m + 8\ell + 12} + A ^{4} \hspace{1.5in} \]\[\hspace{.5in}+ A^{8 + 8 \ell + 8\ell m   + 4 \ell^2 m} - A^{12 + 8 \ell + 8\ell m   + 4 \ell^2 m} = A^{ 4 - 6\ell^2 m }(1 - A^{8}).\] Now $4-6\ell^2 m$ must be in the set $\{8\ell m + 4m + 8 \ell + 12, 4, 8 + 8\ell + 8 \ell m + 4 \ell^2 m\}$ and $12 - 6 \ell^2 m$ must be in the set $\{8 \ell m + 4m + 12, 8 \ell + 8, 12 + 8 \ell + 8 \ell m + 4\ell^2 m\}$. Again, when considered individually, the only possibility is that $4 - 6 \ell^2 m = 8\ell m + 4m + 8 \ell + 12$ and $12 - 6 \ell^2 m = 8 \ell m + 4m + 12$, but together these equations require $8 \ell = -8$, a contradiction.

\textit{Case \ref{case:p1qbig}}: Finally, suppose that $1 = p-2k < q - 2\ell$, so $-2k \equiv 1 \pmod p$. Let $i$ be that introduced in Lemma \ref{lem:k}. 

From Lemma \ref{lem:kisqhat}, when $i = 2$, $q \equiv k^{-1} \pmod p$. Putting these two modular equations together, we have $q = mp -2 = 2km + m -2$ for some integer $m \ge 1$. Since we are assuming $q \ge 3$, we must have that $(k,m) \neq (1,1)$. Using Corollary \ref{cor:ell}, we have $\ell = km - 1$. Then we can express the auxiliary polynomial in terms of only $k$ and $m$ as
\[\frac{ A^{2(2k)(2km+m-3)-2\abs{s}}}{1-A^{8}} \left[ X'_{2k+1,2km+m-2} - \sum_{i=0}^{\abs{s}-1}(-1)^i A^{4i} A^{-4-4\abs{s} + 8km + 4 k^2 m}X'_{1,m} \right]. \]

The polynomial $X'_{1,m}$ is $1-A^8$, so we use Lemma \ref{lem:smallcancellation}, with $\sigma = -4- 4 \abs{s} + 8km + 4k^2m$ to simplify the polynomial so that it is trivial exactly when 
\[ 1- A^{8k+8} - A^{8km+4m-4} + A^{8k + 8km + 4m -4}- A^{\sigma} \hspace{2in} \]\[\hspace{.65in} + A^{\sigma + 4} + A^{-4 + 8km + 4k^2m} - A^{8km + 4k^2m}  = A^{2\abs{s} - 4k(2km+m-3)}(1-A^{8}).\] Because $k, m \ge 1$, this happens only if either $2\abs{s} - 4k(2km+m-3) = 0$ or $\sigma = 0$. 

If $2\abs{s} - 4k(2km+m-3) = 0$, $\abs{s} = 4k^2m+2km-6k$ and the right side of the equation is $ 1-A^8$. One of the terms on the left with a coefficient of $-1$ must have an exponent of 8, so  either $8km+4m - 4 = 8$ or $\sigma =8$. In the former case $m(2k+1) = 3$, which requires $k = m = 1$, contradicting our assumption that $(k,m) \neq (1,1)$. In the latter case, $\sigma = 8$ tells us that $\abs{s} = 2km + k^2 m -3$. Then we have two expressions for $\abs{s}$ that, when combined, required $k=m=1$, a contradiction.

Then it must be that $\sigma = -4- 4 \abs{s} + 8km + 4k^2m = 0$, so we have $\abs{s} = 2km + k^2 m -1$. Now the exponent on the right becomes $2\abs{s} - 8k^2 m - 4km + 12k =-2 - 6k^2 m  + 12k$. It must be, then, that $-2 - 6k^2 m  + 12k$ is in the set $\{0, 4,  8k+8km+4m - 4 \}$. If $-2 - 6k^2 m  + 12k = 0$, we find that 2 is divisible by 6, and if 
$-2 - 6k^2 m  + 12k = 4$, we that $k = m = 1$, both of which are contradictions. If $-2 - 6k^2 m  + 12k = 8k+8km+4m - 4 $, we see that $3k^2 m + 2m (2k+1) = 2k+1$. However, $3k^2 m + 2m (2k+1) > 2k +1$, so we have another contradiction.

Since the $i =2$ case gave us no examples of knots with trivial Jones polynomial, we move on the the $i=0$ case. 

When $i = 0$, $q \equiv -k^{-1} \pmod p$, so we have $q = mp + 2 = 2km + m + 2$, where $m \ge 1$. Using Corollary \ref{cor:ell}, we have $\ell = km + 1$, and the auxiliary polynomial can be written as

\[\frac{A^{2(2k)(2km+m+1)-2\abs{s}}}{1-A^{8}} \left[ X'_{2k+1,2km+m+2} - \sum_{i=0}^{\abs{s}-1}(-1)^i A^{4i} A^{8-4\abs{s}+8k + 8 km + 4k^2 m}X'_{1,m} \right]. \]

Again, $X'_{1,m} = 1-A^8$, so we have the cancellation described in Lemma \ref{lem:smallcancellation}, and the auxiliary polynomial is trivial exactly when 
\[ 1 - A^{8k+8} - A^{8km+4m+12} + A^{8k+8km+4m+12}  - A^{\sigma} \hspace{2in} \]\[\hspace{.2in} + A^{\sigma + 4}  + A^{8+8k+8km+4k^2m} - A^{12+8k+8km+4k^2m} = A^{2\abs{s} - 4k(2km+m+1)}(1-A^{8}) \]

where $\sigma = 8-4\abs{s}+8k + 8 km + 4k^2 m$.

Because $k, m \ge 1$, this equation can hold only if $2\abs{s} - 4k(2km+m+1) = 0$ or $\sigma = 0$. If $2\abs{s} - 8k^2 m - 4km - 4k = 0$, then we must have $\sigma = 8$, so $2\abs{s} = 8k^2 m + 4km + 4k =  4k + 4km + 2k^2 m$, which reduces to $6k^2m = 0$, a contradiction.

If instead, $\sigma =0 $, we have $\abs{s} = 2+2k + 2 km + k^2 m$, so the exponent on the right of the equation becomes $4  - 6 k^2m$. Then we must satisfy the equation 
\[  -A^{8k+8} - A^{8km+4m+12} + A^{8k+8km+4m+12}  + A^{4} \hspace{1.8in} \]\[\hspace{.2in}  + A^{8+8k+8km+4k^2m} - A^{12+8k+8km+4k^2m} = A^{4 - 6 k^2 m}(1-A^{8}) \]

This requires that $4-6k^2m \in \{ 8k+8km+4m+12, 4, 8+8k+8km+4k^2m\}$, which is impossible since $4 - 6k^2m \le -2$.

We have exhausted all possible cases, and the only cases which allow for a trivial Jones polynomial are those which occur in Theorem \ref{thm:LeesUnknots}, so our proposition is proven.

\end{proof}

Our next application lives in the more general area of twisted torus links.

\begin{prop}
Let $p,q$ and $s$ be integers with $p \ge 3, q\ge 1$, $\gcd(p,q) = 1$, and $s>0$. The Jones polynomial for $T((p,q),(2,s))$ is not trivial.
\end{prop}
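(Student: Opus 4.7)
The plan is to closely mirror the proof of Proposition \ref{prop:LeeisRight}, now using Lemma \ref{lem:AuxPolyPos} in place of Lemma \ref{lem:AuxPolyNeg}. Setting the auxiliary polynomial equal to $1$ and clearing denominators reduces the claim to showing that no choice of parameters satisfies
\[X'_{p,q} - (-1)^s \sum_{i=0}^{s-1}(-1)^i A^{-4i+\tau} X'_{p-2k,q-2\ell} = (-1)^s A^{-2(p-1)(q-1)-2s}(1 - A^8),\]
where $\tau = 4s-2 + 4(k+\ell-k\ell) + 2(kq+\ell p)$. Since the right-hand side has only two monomials while $X'_{p,q}$ already contributes four on the left, it is enough to show that the summation always contributes at least seven distinct, uncancelled monomials.

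First I would run the counting argument from Proposition \ref{prop:LeeisRight} in this setting. The summation is a linear combination of four degree-shifted copies of $\sum_i (-1)^i A^{\tau-4i}$, one per term of $X'_{p-2k,q-2\ell}$. The shifts point in the opposite direction from the negative-$s$ case because the exponent inside the sum now decreases with $i$, but the bookkeeping is otherwise identical: assuming $p-2k \le q - 2\ell$ without loss of generality, comparing where adjacent shifts can first interact yields at least $2(p-2k)+1$ uncancelled terms, which suffices whenever $p-2k \ge 3$. The borderline $p-2k = 2$ case is handled by the same sign-parity refinement as in Proposition \ref{prop:LeeisRight}, pushing the count to at least $7$. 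This reduces us to situations in which at least one of $p-2k$ or $q-2\ell$ is $0$ or $1$.

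Next I would establish the positive-$s$ analog of Lemma \ref{lem:smallcancellation}, in which the double sum telescopes to at most four explicit monomials whenever $X'_{p-2k,q-2\ell} = 1-A^8$, and then run the five-case analysis of Proposition \ref{prop:LeeisRight}: (i) $q=1$, $\ell=0$; (ii) $q=2$, $\ell=1$ with $p = 2k\pm 1$; (iii) $p-2k = q-2\ell = 1$; (iv) $1 = q-2\ell < p-2k$; and (v) $1 = p-2k < q-2\ell$. In each case, after substituting the parameter relations from Lemmas \ref{lem:kisqhat} and \ref{lem:lisphat} and Corollary \ref{cor:ell}, the simplified left-hand side must agree monomial-for-monomial with the two-term right-hand side, which imposes a small system of exponent equations on the parameters.

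The main obstacle will be case (ii), $q-2\ell=0$, since this is precisely where Lee's infinite family of unknots arises in the negative regime. The critical difference between Lemmas \ref{lem:AuxPolyNeg} and \ref{lem:AuxPolyPos} is the sign of $s$ in the overall $A$-prefactor, $+2s$ rather than $-2\abs{s}$; this flips the equations that previously produced the unknots, so the analogs of $\abs{s} = 2k$ and $\abs{s} = 2(k+1)$ now become $s = -2k$ and $s = -2(k+1)$, both of which are ruled out by $s>0$. The Diophantine contradictions that dispatch cases (iii)--(v) in Proposition \ref{prop:LeeisRight} are essentially independent of the sign of $s$ and carry over with only cosmetic changes, completing the proof.
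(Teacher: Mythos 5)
Your overall strategy---replaying the five-case analysis of Proposition \ref{prop:LeeisRight} with Lemma \ref{lem:AuxPolyPos} in place of Lemma \ref{lem:AuxPolyNeg}---might be pushable to a complete argument, but as written it has a concrete gap: the reduction to the five small cases fails for small $s$. Your counting step needs the summation to contribute at least seven uncancelled monomials, obtained as roughly $2(p-2k)+1$ terms spread across four degree-shifted copies of $\sum_{i=0}^{s-1}(-1)^iA^{\tau-4i}$. But each copy has only $s$ terms, so when $s=1$ the entire summation is $A^{\tau}X'_{p-2k,q-2\ell}$, which has at most four monomials no matter how large $p-2k$ and $q-2\ell$ are; the threshold of seven is unreachable and the claimed bound $2(p-2k)+1$ is simply false there. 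Proposition \ref{prop:LeeisRight} never meets this problem because there $s$ is even and negative, so $\abs{s}\ge 2$; here $s=1$ is allowed. Relatedly, Lemma \ref{lem:smallcancellation} uses the evenness of $s$ to determine which four boundary terms survive the telescoping, so for odd positive $s$ (and degenerately for $s=1$) your ``positive-$s$ analog'' requires genuinely new bookkeeping, not a cosmetic change. Finally, the assertion that the Diophantine contradictions of cases (iii)--(v) carry over is precisely the content that would need to be verified; the proposal does not do so.

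The paper takes a much shorter route that makes the entire case analysis unnecessary. Writing the triviality condition as $(-1)^sX'_{p,q}-\sum_{i=0}^{s-1}(-1)^iA^{-4i+\tau}X'_{p-2k,q-2\ell}=(1-A^{8})A^{-2(p-1)(q-1)-2s}$, one observes that every exponent occurring in the summation has the form $\tau+\alpha-4i$ with $\tau+\alpha\ge 4s+4$ and $4i\le 4s-4$, hence is strictly positive. So the constant term $(-1)^s$ of $(-1)^sX'_{p,q}$ can never cancel, which forces the right-hand side to have a constant term, i.e. $2(p-1)(q-1)+2s=8$; comparing the signs of the constant terms then forces $s$ odd, and the only solution of $(p-1)(q-1)+s=4$ with $s$ odd and $p\ge 3$ is $(p,q,s)=(4,2,1)$, which violates $\gcd(p,q)=1$. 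You correctly identified the sign flip $+2s$ versus $-2\abs{s}$ as the crucial difference between the two regimes, but its real payoff is this one-inequality argument, not a rerun of the unknot classification.
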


\begin{proof}
Again we work with the auxiliary polynomial of the mirror image of our desired knot. From Lemma \ref{lem:AuxPolyPos}, the auxiliary polynomial for $T\mi((p,q),(2,s))$ is 
\[\frac{(-1)^{s} A^{2(p-1)(q-1)+2s}}{1-A^{8}} \left[ X'_{p,q} - (-1)^{s}\sum_{i=0}^{s-1}(-1)^i A^{-4i} A^{4s-2 + 4(k+\ell-k\ell) + 2(kq + \ell p)}X'_{p-2k,q-2\ell} \right]\]
with $X'_{p,q} = 1 - A^{4p+4} - A^{4q+4} + A^{4p+4q}$ and $X'_{p-2k,q-2\ell} = 1 - A^{4p +4 - 8k} - A^{4q+4 - 8\ell} + A^{4p+4q - 8k - 8\ell}$. Suppose, on the contrary, that the auxiliary polynomial is equal to 1. We must have  
\[(-1)^s X'_{p,q} - \sum_{i=0}^{s-1}(-1)^i A^{-4i} A^{4s-2 + 4(k+\ell-k\ell) + 2(kq + \ell p)}X'_{p-2k,q-2\ell} = (1-A^{8})A^{-2(p-1)(q-1)-2s} \]

Because $p \ge 3$, $q \ge 1$, and $s\ge 1$, $2(p-1)(q-1)+2s \ge 6$, so the exponent $-2(p-1)(q-1)-2s$, on the right side of the equation, is not zero. Unless $2(p-1)(q-1)+2s = 8$ (so that the right side of the equation becomes $A^{-8}-1$), there is no constant term on the right. 

When $2(p-1)(q-1)+2s \neq 8$, the $(-1)^s$ term in $(-1)^s X'_{p,q}$ must cancel with some other term on the left. That is, there is an $i$ so that $-4i + 4s - 2 + 4(k+\ell-k\ell) + 2(kq+\ell p) + \alpha = 0$, where $\alpha \in \{0, 4p+4 - 8k, 4q+4 - 8 \ell, 4p + 4q - 8k - 8 \ell\}$. Rearranging this equation, we have $4i = 4s - 2 + 4(k+\ell) + 2kq + 2\ell(p - 2k) + \alpha$. Using the restrictions on the parameters, the right side of this equation has a lower bound of $4s + 14$. However, $0\le i \le s - 1$, so the left side of the equation has an upper bound of $4s-4$. Therefore we have that the $(-1)^s$ term of $(-1)^s X'_{p,q}$ persists, and $2(p-1)(q-1)+2s = 8$.

When $s$ is even, the $1$ on the left side of the equation appears with coefficient 1, but on the right side, the constant term appears with coefficient $-1$. This means there must exist two terms  with exponent 0 on the left side, which we have established to be impossible. Thus $s$ is odd.

The possible triples $(p,q,s)$ that provide a solution to the equation $2(p-1)(q-1)+2s = 8$ are  $(3, 1, 4)$, $(3, 2, 2)$, $(4, 1, 4)$, and $(4, 2, 1)$. The only one of these with odd $s$ is $(4,2,1)$, which requires $\gcd(p,q) = 2$, a contradiction. Hence the auxiliary polynomial of $T\mi((p,q),(2,s))$ is never trivial, and therefore the Jones polynomial of $T((p,q),(2,s))$ is never trivial.
\end{proof}

In this proof, the requirement that $p$ and $q$ are coprime rules out the link $T((4, 2),(2,1))$ as a potential option for a link with trivial auxiliary polynomial. By drawing this link, it is easy to see that it is isotopic to the torus knot $T(2,5)$, so its Jones polynomial is known to be nontrivial.

\begin{cor}
Let $p,q$ and $s$ be integers with $p \ge 3, q\ge 1$, $\gcd(p,q) = 1$, and $s$ positive and even. Then $T((p,q),(2,s))$ is not the unknot.
\end{cor}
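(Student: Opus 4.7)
The plan is to derive this corollary as an immediate consequence of the immediately preceding proposition. That proposition established that under the hypotheses $p \ge 3$, $q \ge 1$, $\gcd(p,q)=1$, and $s > 0$, the Jones polynomial $V(T((p,q),(2,s)))$ is not equal to $1$. The hypotheses of the corollary are a special case (with the additional restriction that $s$ is even), so the proposition applies verbatim.

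The argument is then a one-step contrapositive: since the Jones polynomial is a knot invariant and $V(\text{unknot}) = 1$, any knot or link with nontrivial Jones polynomial cannot be the unknot. Therefore $T((p,q),(2,s))$, whose Jones polynomial is nontrivial by the proposition, is not the unknot.

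There is no real obstacle here; the corollary is essentially a restatement of the proposition in terms of the underlying topology rather than the polynomial invariant. The only thing to verify is that the hypotheses of the corollary imply the hypotheses of the proposition, which is immediate since ``$s$ positive and even'' is a strengthening of ``$s > 0$.'' The restriction that $s$ is even is included so that $T((p,q),(2,s))$ is actually a knot (namely the twisted torus knot $T(p,q,2,n)$ with $s = 2n$), making the phrase ``is not the unknot'' unambiguous, but the proof itself does not otherwise use parity. So the proof will consist of a single sentence invoking the previous proposition.
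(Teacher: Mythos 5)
Your proposal is correct and matches the paper's intent exactly: the corollary is stated without proof precisely because it follows immediately from the preceding proposition together with the fact that the unknot has trivial Jones polynomial. Your one additional observation, that the parity hypothesis only serves to guarantee the link is a knot, is also consistent with the paper's remarks.
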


In this work, we use a twisted torus link of length two, $T((p_1, q_1),(p_2, q_2))$, with $p_2 = 2$ to produce our computations. To generalize this work, one could work with $p_2=3$; however, more care would need to be taken in calculating the bracket polynomial: as the number of strands being twisted increases, the potential outcomes of resolution changes, in particular such as the one done in Lemma \ref{lem:skeinone}, grows exponentially.

\printbibliography

\vspace{.4in}
\noindent Brandon Bavier\\
West Virginia University\\
brandon.bavier@mail.wvu.edu

\vspace{.2in}
\noindent Brandy Doleshal\\ 
Sam Houston State University\\
bdoleshal@shsu.edu

\end{document}